\documentclass[12 pt]{amsart}
\usepackage{graphicx}
\newtheorem{theorem}{Theorem}[section]
\newtheorem{lemma}[theorem]{Lemma}
\newtheorem{proposition}[theorem]{Proposition}

\newtheorem{definition}[theorem]{Definition}

\theoremstyle{definition}
\newtheorem{remark}[theorem]{Remark}
\newtheorem{assumption}[theorem]{Assumption}

\numberwithin{equation}{section}
\newcommand{\CC}{\mathbb{C}}
\newcommand{\RR}{\mathbb{R}}

\newcommand{\ZZ}{\mathbb{Z}}
\newcommand{\DD}{\mathbb{D}}

\newcommand{\NN}{\mathbb{N}}
\newcommand{\bV}{\mathbf{V}}
\newcommand{\bH}{\mathbf{H}}
\newcommand{\bL}{\mathbf{L}}
\newcommand{\bE}{\mathbf{E}}
\newcommand{\bF}{\mathbf{F}}
\newcommand{\bS}{\mathbf{S}}

\newcommand{\cH}{\mathcal{H}}
\newcommand{\cM}{\mathcal{M}}
\newcommand{\cN}{\mathcal{N}}
\newcommand{\cK}{\mathcal{K}}

\DeclareMathOperator{\spa}{span}

\DeclareMathOperator{\ran}{ran}
\DeclareMathOperator{\rank}{rank}
\DeclareMathOperator{\diag}{diag}
\title{Schur  functions on a rhombic lattice}
\author[D.  Alpay]{Daniel Alpay}
\address{(DA)
Faculty of Mathematics, Physics, and Computation\\
Schmid College of Science and Technology\\
Chapman University\\
One University Drive\\
Orange, CA 92866\\
USA}
\email{alpay@chapman.edu}

\author[A. Fuerte Perez]{Angel Fuerte Perez}
\address{(AFP) Department of Mathematics\\ Kansas State University\\ 138 Cardwell Hall\\1228 N.  17th Street\\
  Manhattan, KS 66506\\ USA}
  \email{acfuerte@ksu.edu}

\author[D.  Volok]{Dan Volok}
\address{(DV) Department of Mathematics\\ Kansas State University\\ 138 Cardwell Hall\\1228 N.  17th Street\\
  Manhattan, KS 66506\\ USA}
\email{danvolok@math.ksu.edu}

\begin{document}

\begin{abstract}
  We extend the study of discrete analytic (DA) Schur functions to rhombic lattices, utilizing suitably defined shift operators. There is a number of important differences with the classical case, including eigenvalues of the backward shift operator.
  As an application we solve a basic interpolation problem in a weighted Hardy space of DA functions, introducing a discrete counterpart of the Blaschke factor.

\end{abstract}
\maketitle

\mbox{}\\

\noindent AMS Classification: 30G25; 47B32

\noindent {\em Keywords:} Discrete analytic functions; reproducing kernel Hilbert space
\date{today}
\setcounter{section}{-1}
\section{Introduction}
In this paper we continue the investigation of discrete analytic (DA) analogues of the Schur class with applications to interpolation. In previous papers \cite{av2022}, \cite{av2023} DA Schur functions were studied on the integer lattice in the complex plane, and a basic interpolation problem on a rectangular region of the lattice was solved by means of a suitable DA Blaschke product. 
An important role was played by a certain basis of DA polynomials and the associated shift operators. Following the ideas of \cite{aktv}, we now generalize the notion of the shift operators to the setting of DA functions on a rhombic lattice $\Lambda$ in the complex plane $\CC$ -- a monohedral tessellation of $\CC$ with  unit rhombi. We define, under suitable assumptions on the lattice, a forward shift operator that is left-invertible, and choose an appropriate left inverse for the role of the backward shift operator.  It turns out that, unlike the classical case of continuous analytic functions in a neighborhood of the origin, the eigenvalues of the backward shift operator form a complement of a finite non-empty set that corresponds to the set of directions of the edges of the lattice (Theorem \ref{genfunthm}). 

Iterating the forward shift operator, we define a linearly independent family of DA functions that plays the role of the standard polynomial basis $z^n$ in this case. Completions of this basis with respect to an inner product, in which the basis becomes orthogonal, lead to weighted Hardy spaces of DA functions. From the investigation of backward shift invariant spaces of vector-valued DA functions we recover the correspondence between co-isometric operator colligations and contractive (convolution) multiplication operators on the weighted Hardy spaces (see \cite{bbth} for an exposition in the continuous setting). In the process, a suitable DA analogue of the Schur kernel
$$K(z,w)=\dfrac{I-S(z)S(w)^*}{1-z\bar{w}}$$ is identified (Theorem \ref{Schkerthm}). Finally, as an important example, we introduce DA Blaschke factors that are used to solve a basic interpolation problem in the weighted DA Hardy space (Theorem \ref{interpolthm}).

The paper consists of five sections besides this Introduction. Section \ref{prelim} deals with the background on DA functions and elementary properties of the shift operators. DA polynomials and their convolution product are studied in Section \ref{eigen}.  In Section \ref{RKHS-DA} we characterize operator-valued positive kernels that give rise to backward shift invariant reproducing kernel Hilbert spaces (RKHS) of DA functions. Section \ref{Sec-Sch} is devoted to the study of the DA Schur class and, finally, Section \ref{Sec-Bla} describes  the basic interpolation problem in the weighted Hardy space of DA functions and its solution by means of DA Blaschke factors.

\section{Preliminaries}\label{prelim}
In what follows,  $\Lambda$ is a rhombic lattice in the complex plane $\CC$ -- a monohedral tessellation of $\CC$ with  unit rhombi (see Figure \ref{rholat}).

\begin{figure}[h]
 \includegraphics[width=\textwidth]{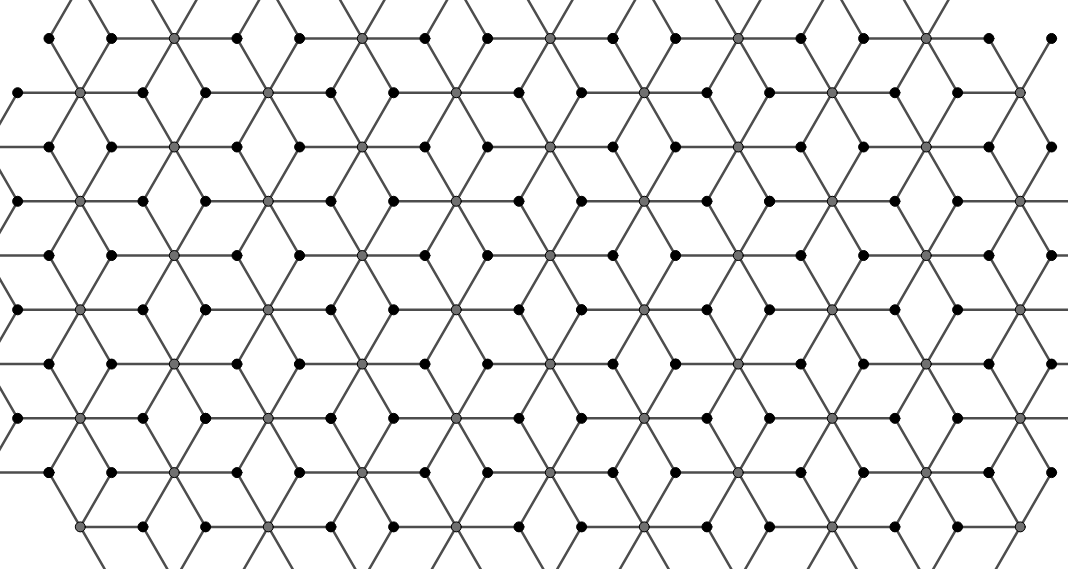}
 \caption{Example of a rhombic lattice}
 \label{rholat}
 \end{figure}

The sets of vertices, edges and faces of $\Lambda$ are denoted by $\bV(\Lambda),$ $\bE(\Lambda)$ and $\bF(\Lambda),$ respectively. The set of directions of the edges of $\Lambda$ is denoted by $\hat{\bE}(\Lambda):$
$$\hat{\bE}(\Lambda)=\{b-a:(a,b)\in \bE(\Lambda)\}.$$ Note that $\hat{\bE}(\Lambda)$ is a finite set (see \cite{aktv}, Proposition 2.1).There is no loss of generality in assuming that $0\in\bV(\Lambda)$ and $1\in\hat{\bE}(\Lambda).$

The first notion to be discussed is that of discrete analyticity. One can find in the literature many different definitions of analyticity for functions on graphs; the definition used here originated with J. Ferrand (\cite{Ferrand}) in the case of a square lattice  and was extended to the case of rhombic lattices by R. J. Duffin in \cite{rhombic}. It can be formulated in the language of discrete integrals.

Let $a,b\in \bV(\Lambda).$ A path in $\Lambda$ from 
$a$ to $b$ is a finite sequence $(z_0,z_1,\dots,z_N)$ of vertices of $\Lambda,$ where $z_0=a,$ $z_N=b,$ and  for $n=1,\dots,N$
the vertices $z_{n-1}$ and $z_n$ are adjacent in $\Lambda.$  A path from $a$ to $b$ is closed if $a=b.$ Given a function $f:\bV(\Lambda)\longrightarrow\mathcal{V},$  where $\mathcal{V}$ is a complex vector space, the discrete integral of $f$ over a path $\gamma=(z_0,\dots,z_N)$ is defined by
$$\int_\gamma f \delta z=\sum_{n=1}^N \dfrac{f(z_{n-1})+f(z_n)}{2}(z_n-z_{n-1}).$$

\begin{definition}\label{ferrand} Function $f:\bV(\Lambda)\longrightarrow\mathcal{V}$ is said to be discrete analytic (DA) if for every closed path $\gamma$ in $\Lambda$
$$\int_\gamma f \delta z=0.$$ The space of DA functions is denoted by $\cH(\Lambda).$
\end{definition}

Note that  every closed path of the form $(z_0,z_1,z_2,z_3,z_0),$
where $z_2\not=z_0$ and $z_3\not=z_1,$ forms a rhombic face of $\Lambda;$ it is denoted simply by $z_0z_1z_2z_3.$

\begin{theorem}[\cite{rhombic}, Theorem 1]\label{CRthm}
Function $f:\bV(\Lambda)\longrightarrow\mathcal{V}$ is DA if, and only if,  on every face $z_0z_1z_2z_3$ of $\Lambda$, $f(z)$ satisfies the discrete Cauchy - Riemann equation
\begin{equation}\label{CR}\dfrac{f(z_0)-f(z_2)}{z_0-z_2}=\dfrac{f(z_1)-f(z_3)}{z_1-z_3}.\end{equation}
\end{theorem}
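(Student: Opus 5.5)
The plan has two steps. First compute the discrete integral of $f$ around a single rhombic face and identify it with the Cauchy--Riemann expression (\ref{CR}). Then show that the face integrals generate all closed-path integrals, because every closed path in $\Lambda$ is a combination of face boundaries. This gives both directions: if $f$ is DA, the integral over each face (a closed path) vanishes, which will be equivalent to (\ref{CR}); conversely, if (\ref{CR}) holds on every face, then every closed-path integral is a sum of vanishing face integrals.

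\textbf{Step 1 (face integral).} For a face $z_0z_1z_2z_3$, expand
$$\int_{(z_0,z_1,z_2,z_3,z_0)} f\,\delta z=\tfrac12\sum_{n=0}^{3}\bigl(f(z_n)+f(z_{n+1})\bigr)(z_{n+1}-z_n),$$
with indices mod $4$. Collect the coefficient of each $f(z_n)$; it is $\tfrac12(z_{n+1}-z_{n-1})$. The integral therefore equals
$$\tfrac12\Bigl[(f(z_0)-f(z_2))(z_1-z_3)-(f(z_1)-f(z_3))(z_0-z_2)\Bigr].$$
For a genuine rhombus the diagonals $z_0-z_2$ and $z_1-z_3$ are nonzero, since $z_2\neq z_0$ and $z_3\neq z_1$. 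Dividing by $(z_0-z_2)(z_1-z_3)$ shows that the face integral vanishes if and only if (\ref{CR}) holds. This already gives the ``only if'' direction.

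\textbf{Step 2 (reduction to faces).} For the converse, observe that the integral is additive under concatenation of paths. It also changes sign under reversal and vanishes on backtracking segments $(a,b,a)$, as one checks from the definition. Hence the integral defines a $1$-cochain on the cell complex of $\Lambda$, and closed paths correspond to $1$-cycles. The tessellation fills $\CC$, which is simply connected, so the $2$-dimensional cell complex has trivial first homology. Every $1$-cycle is therefore the boundary of a finite $2$-chain of faces, and the integral over a closed path equals the sum of integrals over finitely many oriented faces, which are zero by Step 1.

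\textbf{Main obstacle.} I expect the reduction in Step 2 to be the main obstacle. To make it concrete without homology, I would proceed by induction on the number of faces enclosed by the path, or equivalently on path length plus enclosed area. Given a closed path, first cancel backtracks. If the path is then non-trivial, it bounds a finite region of faces, and I would find a face sharing an edge with the path and reroute across that face; this replaces one edge by the other three edges of the face, or removes two edges, and reduces the enclosed count. Self-intersecting paths are split at repeated vertices into simple loops. Simple closed lattice paths bound finitely many faces by the Jordan curve theorem applied to polygons, and the integral over the boundary equals the sum over those faces because interior edges are traversed once in each direction and cancel.
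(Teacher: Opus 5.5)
Your proof is correct and complete. The paper does not prove this theorem itself but quotes it from Duffin; your Step 1, which writes the face integral as $\tfrac12\bigl[(f(z_0)-f(z_2))(z_1-z_3)-(f(z_1)-f(z_3))(z_0-z_2)\bigr]$, is exactly the identity the paper relies on in the proof of Lemma~\ref{LSE} to pass from a vanishing face integral to \eqref{CR}, and your Step 2 (every closed path is a finite sum of oriented face boundaries because the tessellated plane has trivial first homology) is the standard completion, so nothing is missing.
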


As is clear from Definition \ref{ferrand},  the integral of a DA function $f(z)$ over a path $\gamma$ from $a$ to $b$ is independent  of the choice of $\gamma,$ hence a simplified notation can be used:
$$\int_a^b f \delta z=\int_\gamma f\delta z,$$
where $\gamma$ is any path in $\Lambda$ from $a$ to $b.$

The following useful observation relies on the fact that the faces of $\Lambda$ are parallelograms.

\begin{theorem}[\cite{rhombic}, Theorem 5] Let $z_0\in \bV(\Lambda)$ be fixed. If $f(z)$ is a DA function, then so is
$$F(z)=\int_{z_0}^z f\delta z.$$\end{theorem}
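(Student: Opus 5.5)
By Theorem \ref{CRthm}, it suffices to check that $F$ satisfies the discrete Cauchy--Riemann equation \eqref{CR} on every face $z_0z_1z_2z_3$ of $\Lambda$. First, $F$ is well defined, since $f$ is DA and so the discrete integral from $z_0$ to $z$ is path independent. Fix a face and write it as a parallelogram: $z_1=z_0+u$, $z_2=z_0+u+v$, $z_3=z_0+v$, with $u,v\in\hat{\bE}(\Lambda)$. Path independence lets us compute all differences of $F$ at the four vertices along edges of the face. Along the path $z_0\to z_1\to z_2$,
\begin{equation*}
F(z_2)-F(z_0)=\frac{f(z_0)+f(z_1)}{2}\,u+\frac{f(z_1)+f(z_2)}{2}\,v .
\end{equation*}
Along the path $z_3\to z_0\to z_1$,
\begin{equation*}
F(z_1)-F(z_3)=-\frac{f(z_3)+f(z_0)}{2}\,v+\frac{f(z_0)+f(z_1)}{2}\,u .
\end{equation*}
The key point is that the faces are parallelograms, so opposite edges carry the same increments $u$ and $v$.

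The target identity is $(F(z_0)-F(z_2))(z_1-z_3)=(F(z_1)-F(z_3))(z_0-z_2)$, that is,
\begin{equation*}
(F(z_2)-F(z_0))(u-v)=(F(z_1)-F(z_3))(u+v).
\end{equation*}
I expand both sides as combinations of $f(z_j)$ with coefficients among $u^2$, $uv$ and $v^2$, and subtract. Write $f_j=f(z_j)$. The $u^2$ terms cancel. The remaining terms should reduce to
\begin{equation*}
\frac{v^2}{2}\bigl(f_3+f_0-f_1-f_2\bigr)+\frac{uv}{2}\bigl(f_3-f_1\bigr)+\frac{uv}{2}\bigl(\ldots\bigr),
\end{equation*}
and after collecting, this should come out proportional to
\begin{equation*}
v\Bigl[(f_0-f_2)(u-v)-(f_1-f_3)(u+v)\Bigr].
\end{equation*}
That bracket vanishes precisely because $f$ satisfies \eqref{CR} on this face, since $z_0-z_2=-(u+v)$ and $z_1-z_3=u-v$.

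The main obstacle is only this bookkeeping: verifying that the difference collapses exactly to a multiple of the Cauchy--Riemann expression for $f$. If the direct expansion turns out messy, I would use a cleaner alternative. Compute the integral of $F$ around the face directly, and show it equals a constant times $v\cdot u$ times the integral of $f$ around the face, plus terms that vanish by \eqref{CR}. Here I use the fact that, for a parallelogram, $\oint F\,\delta z$ depends only on $F(z_0)-F(z_2)$ and $F(z_1)-F(z_3)$. Either way, no earlier results beyond Theorem \ref{CRthm} and path independence are needed.
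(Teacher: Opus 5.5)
Your reduction is the right one. The paper gives no proof of its own: it cites Duffin and only remarks that the parallelogram shape of the faces is what matters. Checking \eqref{CR} for $F$ face by face via Theorem \ref{CRthm}, with differences of $F$ computed along edges of the face, is exactly how to use that. Your formulas for $F(z_2)-F(z_0)$ and $F(z_1)-F(z_3)$ are correct, as are your restatement of the target and the cancellation of the $u^2$ terms.

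The gap is in the one step that carries the content. You leave it as ``should reduce to'' with an ellipsis, and the form you predict is wrong. The bracket $(f_0-f_2)(u-v)-(f_1-f_3)(u+v)$ does \emph{not} vanish by \eqref{CR}. With $z_0-z_2=-(u+v)$ and $z_1-z_3=u-v$, equation \eqref{CR} for $f$ reads $(f_0-f_2)(u-v)=-(f_1-f_3)(u+v)$. Your bracket therefore equals $2(f_0-f_2)(u-v)$, which is nonzero in general: for $f(z)=z$ on a square face $0,1,1+i,i$ it equals $-4$. As written, your final step asserts something false.

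Carrying out the expansion actually gives
\begin{align*}
(F(z_2)-F(z_0))(u-v)-(F(z_1)-F(z_3))(u+v)
&=\frac{v}{2}\bigl[(f_2+f_3-f_0-f_1)u+(f_0+f_3-f_1-f_2)v\bigr]\\
&=-\frac{v}{2}\bigl[(f_0-f_2)(u-v)+(f_1-f_3)(u+v)\bigr].
\end{align*}
It is this bracket, with a plus sign, that \eqref{CR} kills. Equivalently, the right-hand side is $-v\int_{z_0z_1z_2z_3}f\,\delta z$, so it vanishes directly by Definition \ref{ferrand}. This also gives the precise version of your alternative: $\int_{z_0z_1z_2z_3}F\,\delta z=\frac{z_3-z_0}{2}\int_{z_0z_1z_2z_3}f\,\delta z$, with a factor $v/2$, no factor of $u$, and no leftover terms. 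With this correction your argument is complete.
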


At this point, one additional assumption about lattice $\Lambda$ needs to be made. This assumption can be stated in the language of tracks, adopted from \cite{kenyon}. A {\em track} in $\Lambda$ is a sequence $F=(F_n)_{n\in\ZZ}$ in $\bF(\Lambda),$ such that for every $n\in\ZZ$  faces $F_n$ and $F_{n+1}$ are adjacent and, furthermore,  the edges shared by $F_n$ with its neighbors $F_{n+1}$ and $F_{n-1}$ form two opposite sides of the rhombus $F_n.$ These shared edges are referred to as the {\em ties} of the track $F$, and the rest of the edges are called the {\em rails} of the track $F$.  If all the ties of the track $F$ are deleted from   $\bE(\Lambda),$ the resulting graph has two connected components, which are called the {\em sides} of the track $F.$ Two tracks $F$ and $G$ are said to be {\em coherent} if the ties of $F$ are parallel to the ties of $G$, as shown in Figure \ref{cotra} below.

\begin{figure}[h]
 \includegraphics[width=\textwidth]{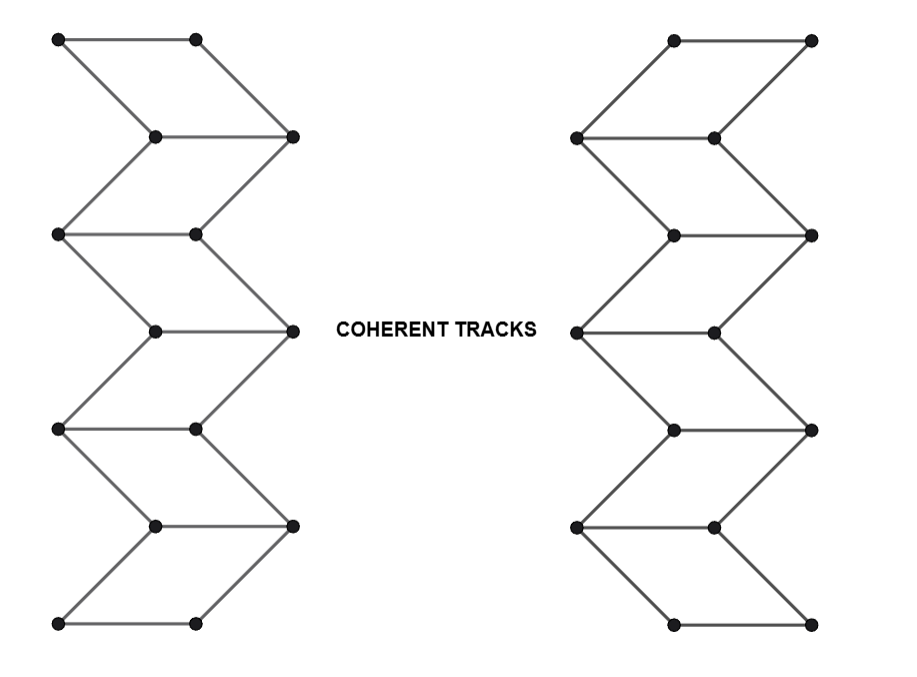}
 \caption{Coherent tracks}
 \label{cotra}
 \end{figure}

\begin{assumption}\label{mainass} Both sides of every track $F$ in $\Lambda$ contain tracks that are coherent to $F.$
\end{assumption}

From Assumption \ref{mainass}, the following proposition can be deduced immediately.

\begin{proposition}\label{Propleash} Let $e\in\hat{\bE}(\Lambda),$ and let $z\in\bV(\Lambda).$ Then there exists $N\in\NN$ and a path
$(z_0=z,z_1,\dots,z_N),$ such that $z_N-z_{N-1}=e,$ and  $z_n-z_{n-1}\not=\pm e$ if $1\leq n\leq N-1.$ 
\end{proposition}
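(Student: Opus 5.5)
The plan is to use the track structure of rhombic lattices (de Bruijn / Kenyon). Every edge of $\Lambda$ is a tie of exactly one track, namely the track obtained by gluing faces across the edges parallel to it. Fix a track $F$ whose ties are parallel to $e$; such a track exists because $e\in\hat{\bE}(\Lambda)$. Two standard facts will be used. First, the edges of $\Lambda$ parallel to $e$ are exactly the ties of the tracks coherent to $F$ (together with $F$ itself). Second, tracks do not self-intersect, and each track separates the plane into its two sides. So moving from $z$ to an edge with direction $\pm e$ amounts to reaching the first such track encountered.

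\textbf{Step 1: locate a suitable track.} Let $z\in\bV(\Lambda)$. By Assumption \ref{mainass} applied to $F$, both sides of $F$ contain tracks coherent to $F$. The vertex $z$ lies on one side of $F$, or on a rail of $F$. Hence there is a track $G$ coherent to $F$ such that $z$ lies on the side of $G$ opposite to the ties' direction $e$, or on its rail. Concretely, $G$ is either $F$ or a coherent track on a chosen side, picked so that crossing $G$ from the side containing $z$ means traversing a tie in direction $+e$ rather than $-e$. Applying Assumption \ref{mainass} to the appropriate track lets us choose the side as needed, so a coherent track exists in whichever direction we require.

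\textbf{Step 2: among such tracks, take the closest one.} Consider paths from $z$ that end by traversing a tie of some track coherent to $F$ in the direction $+e$. We pick one of minimal length, $(z_0=z,\dots,z_N)$.

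\textbf{Step 3: the minimal path works.} The final step $z_N-z_{N-1}$ can be taken to equal $e$. If an earlier step $z_n-z_{n-1}$ equalled $e$, truncating the path there would give a shorter admissible path, contradicting minimality. If an earlier step equalled $-e$, the path crosses a coherent track $G'$ backwards. In that case we argue that, between $z$ and that crossing, the path can be rerouted along the rails of $G'$ to a tie traversed in direction $+e$. Alternatively, we replace the target track by one lying between $z$ and $G'$; its existence follows from the assumption and from the nesting of coherent tracks, which are disjoint and parallel. Either way the path gets shorter, again contradicting minimality. Note also that $z$ itself may lie on a rail of a coherent track; then a path of length $1$ through the tie, in the appropriate orientation, already works.

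\textbf{Main obstacle.} The hardest step is the exclusion of $-e$ steps in Step 3. It needs a clean description of how a path interacts with the family of coherent tracks: coherent tracks are pairwise disjoint, linearly ordered, and each $\pm e$ step crosses exactly one of them, in a definite direction. The most robust approach is to define a height function $h(v)$, equal to the signed number of coherent tracks separating $v$ from $z$. Each $+e$ step increases $h$ by $1$, each $-e$ step decreases it by $1$, and all other steps leave it unchanged. Assumption \ref{mainass} guarantees that $h$ takes a value greater than $h(z)=0$. We then take $N$ minimal such that some path from $z$ reaches height $1$ with no $\pm e$ steps before its last step. Existence follows because the region $\{h=0\}$ is connected without using $\pm e$ edges: it is the strip between two consecutive coherent tracks, and it is connected via rails. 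That strip contains a rail of the next track, from which we step across a tie.
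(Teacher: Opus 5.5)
Your Steps 2--3 do not work, and the failure is not a technicality: a shortest path ending with a step $e$ need not be an $e$-leash. Call the $+e$ side of a track with ties parallel to $e$ the side that contains the endpoints $v+e$ of its ties $(v,v+e)$. Suppose $(z-e,z)\in\bE(\Lambda)$, so $z$ lies on the $+e$ rail of a coherent track $G'$. Suppose also that neither $z$ nor any neighbour $z_1\neq z-e$ of $z$ is the initial vertex of an edge with direction $e$. This happens, e.g., when the tracks with ties parallel to $e$ are several faces apart, which is compatible with Assumption \ref{mainass}. Then $(z,z-e,z)$ is the unique shortest path ending with a step $e$. It contains a $-e$ step, and every $e$-leash of $z$ has length at least $3$.

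So the contradiction claimed in Step 3 does not exist. Rerouting along the rails of $G'$ on the side of $z$ only reaches ties of $G'$ traversed in direction $-e$. A coherent track lying between $z$ and $G'$ also has $z$ on its $+e$ side, so it does not help either. Step 1 has a related gap: Assumption \ref{mainass} applied to $F$ gives a coherent track on the $+e$ side of $F$, but $z$ may lie beyond it. The real issue is existence, not minimality. The argument has to be organized as in your final paragraph, which is the natural reading of the paper's remark that the proposition is ``deduced immediately'' from Assumption \ref{mainass}.

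That final paragraph, however, only asserts its two key points.

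(i) You need a coherent track $G_1$ with $z$ on its $-e$ side. Only finitely many coherent tracks separate $z$ from a given track, since a path between them uses a tie of each. So if some coherent track has $z$ on its $+e$ side, there is a nearest one, $G_0$. Assumption \ref{mainass} applied to $G_0$ gives a coherent track $G_1$ on its $+e$ side, and by the choice of $G_0$ it has $z$ on its $-e$ side. If no coherent track has $z$ on its $+e$ side, take $G_1=F$.

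(ii) You need $\{h=0\}$ to be connected without $\pm e$ edges; ``connected via rails'' is not an argument. Take any path from $z$ to a vertex on the $+e$ rail of $G_1$, and cut it at its first step raising $h$ from $0$ to $1$; that step is a step $e$. Each earlier excursion into $\{h<0\}$ leaves and re-enters $\{h=0\}$ through ties of $G_0$, at vertices of the $+e$ rail of $G_0$. Replace each such excursion by a walk along that rail, which uses no $\pm e$ edge. The result is an $e$-leash. This is where your rerouting idea genuinely works: it removes a whole excursion rather than producing a step $e$.

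Both (i) and (ii) use that coherent tracks are linearly and discretely ordered in the direction $e$. This does not follow from their being disjoint and separating: three disjoint separating curves in the plane need not have one of them separating the other two. It holds because adjacent faces of a track lie on opposite sides of their common tie. Hence all rails of a track with ties parallel to $e$ point into the same open half-plane bounded by $\RR e$. In coordinates $x+iy=\bar{e}w$, such a track is therefore a strip $\{\phi(y)\leq x\leq\phi(y)+1\}$ with $\phi$ continuous. Two disjoint strips of this form satisfy $|\phi_1-\phi_2|\geq 1$ everywhere, so they are ordered and the order is discrete.
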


For brevity, the path mentioned in Proposition \ref{Propleash} will be called a $e${\em-leash of } $z$ of  length $N.$

The stage is now set for the main definition of this section.
\begin{definition}\label{fwd}
The forward shift operator $Z_+:\cH(\Lambda)\longrightarrow \cH(\Lambda)$ is defined by
$$(Z_+f)(z)=\dfrac{f(0)-f(z)}{2}+\int_0^z f\delta z.$$
\end{definition}

The forward shift operator $Z_+$ is an analogue of multiplication by $z$ in the present setting.  To show this, the following technical lemma is needed.

\begin{lemma}\label{LSE}
  Let $z_0z_1z_2z_3\in\bF(\Lambda), $ and let function
$g:\{z_0,z_1,z_2,z_3\}\longrightarrow\mathcal{V}$ be such that  the Cauchy-Riemann equation 
\begin{equation}\label{CRg}
\dfrac{g(z_0)-g(z_2)}{z_0-z_2}=\dfrac{g(z_1)-g(z_3)}{z_1-z_3}
\end{equation}
holds. Furthermore, let  $w(z)$  be a non-constant linear function:
$$w(z)=\alpha z+\beta,\quad\text{where}\quad \alpha\not=0 ,\beta\in\CC.$$
\begin{enumerate}
\item If function
$f:\{z_0,z_1,z_2,z_3\}\longrightarrow\mathcal{V}$
satisfies the linear system \begin{multline}\label{beq}
	w(z_j-z_{j+1})f(z_j)-w(z_{j+1}-z_j)f(z_{j+1})=g(z_j)-g(z_{j+1}),	\\j=0,\dots, 3,\ z_4:=z_0,
	\end{multline}
then $f(z)$ satisfies the Cauchy-Riemann equation \eqref{CR}, as well.

\item If  \begin{equation}\label{nonvan1}
  \{\alpha(z_j-z_{j+1}): j=0,\dots,3\}\cap\{\pm\beta\}=\emptyset,
  \end{equation}
then, for every $\lambda\in\mathcal{V},$ 
 the linear system \eqref{beq} has a unique solution $f(z)$ normalized by
$f(z_0)=\lambda.$
\end{enumerate}
		\end{lemma}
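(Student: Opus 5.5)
Throughout, write the face as a parallelogram: put $a=z_1-z_0=z_2-z_3$ and $b=z_2-z_1=z_3-z_0$, so that $z_0-z_2=-(a+b)$, $z_1-z_3=a-b$, and the increments $z_j-z_{j+1}$ for $j=0,1,2,3$ are $-a,-b,a,b$. Then $w(\pm a)=\beta\pm\alpha a$ and $w(\pm b)=\beta\pm\alpha b$. With $f_j=f(z_j)$ and $g_j=g(z_j)$, the system \eqref{beq} becomes the cyclic system
\begin{align*}
E_0&:\ w(-a)f_0-w(a)f_1=g_0-g_1, & E_1&:\ w(-b)f_1-w(b)f_2=g_1-g_2,\\
E_2&:\ w(a)f_2-w(-a)f_3=g_2-g_3, & E_3&:\ w(b)f_3-w(-b)f_0=g_3-g_0.
\end{align*}

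For part (1), I would add the four equations. The right-hand sides telescope to $0$. On the left the $\beta$-terms also cancel, and a short computation leaves $-\alpha\bigl[(a-b)(f_0-f_2)+(a+b)(f_1-f_3)\bigr]$. Since $\alpha\neq0$, this gives $(a-b)(f_0-f_2)=-(a+b)(f_1-f_3)$. Dividing by $-(a+b)(a-b)$, which is nonzero because the rhombus is nondegenerate, yields exactly \eqref{CR} for $f$. Part (1) therefore does not even use \eqref{CRg}. Denote this bracket by $C(f)$; the computation shows $E_0+E_1+E_2+E_3\equiv-\alpha C(f)$ identically.

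For part (2), hypothesis \eqref{nonvan1} says precisely that $w(\pm a),w(\pm b)\neq0$. Given $f_0=\lambda$, the equations $E_0,E_1,E_2$ determine $f_1$, $f_2$, $f_3$ successively and uniquely, since each step divides by $w(a)$, $w(b)$ or $w(-a)$. This gives uniqueness. For existence it remains to check that $E_3$ holds automatically. Note that the determinant of the cyclic $4\times4$ coefficient matrix is $w(-a)w(-b)w(a)w(b)-w(a)w(b)w(-a)w(-b)=0$, so consistency is a genuine constraint on the data, and it must come from \eqref{CRg}.

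By the identity above, $E_3$ is equivalent to $C(f)=0$ once $E_0,E_1,E_2$ hold. So I would express $f_0-f_2$ and $f_1-f_3$ through $E_0+E_1$ and $E_1+E_2$. These read
\[
\beta(f_0-f_2)-\alpha\bigl(af_0+(a+b)f_1+bf_2\bigr)=g_0-g_2,
\]
and analogously for $g_1-g_3$. Then I would substitute the explicit solution $f_1,f_2,f_3$ in terms of $\lambda$ and the $g$'s into $C(f)$. The $\lambda$-dependence should cancel, because the homogeneous solution satisfies the system with $g=0$ and hence, by the argument of part (1), the sum identity shows it satisfies $E_3$ as well. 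The remaining expression should be a multiple of $(a-b)(g_0-g_2)+(a+b)(g_1-g_3)$, which vanishes by \eqref{CRg}. This bookkeeping step, tracking the $g$-terms to see that they collapse to the Cauchy--Riemann combination for $g$ (possibly multiplied by a product of the nonzero $w$-values), is the main obstacle; I would organize it by working modulo the one-dimensional kernel spanned by the homogeneous solution.
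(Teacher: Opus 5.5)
\textbf{Gap in the existence step of part (2).} Your part (1) and your uniqueness argument are correct and match the paper. Summing the four equations gives a multiple of the discrete integral around the face, and $E_0,E_1,E_2$ can be solved successively with the nonzero pivots $w(a),w(b),w(-a)$.

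The gap is in existence, which is the actual content of part (2). You only predict that the residual of $E_3$ ``should'' reduce to a multiple of $(a-b)(g_0-g_2)+(a+b)(g_1-g_3)$, and you do not carry out that computation. Your reason for the $\lambda$-terms cancelling is also circular. Given $E_0,E_1,E_2$, the sum identity only says that $E_3$ is equivalent to $C(f)=0$. Part (1) derives $C(f)=0$ from all four equations, so it cannot be used to obtain $E_3$. Your observation $\det W=0$, combined with the fact that $E_0,E_1,E_2$ have rank $3$, would rescue the $\lambda$-part, but you do not make that argument, and it still leaves the $g$-terms unaccounted for.

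\textbf{How to close it.} The missing idea, which is how the paper finishes, is the left null vector of the coefficient matrix. Form $w(-b)E_0+w(a)E_1+w(b)E_2+w(-a)E_3$. Its left-hand side vanishes identically in $f$: for example, the coefficient of $f_0$ is $w(-b)w(-a)-w(-a)w(-b)$, and similarly for $f_1,f_2,f_3$. Its right-hand side is
\[
w(-b)(g_0-g_1)+w(a)(g_1-g_2)+w(b)(g_2-g_3)+w(-a)(g_3-g_0)=\alpha\bigl[(a-b)(g_0-g_2)+(a+b)(g_1-g_3)\bigr],
\]
which is $0$ by \eqref{CRg}. Let $r_j$ denote the residual (left minus right side) of $E_j$. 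Then $w(-b)r_0+w(a)r_1+w(b)r_2+w(-a)r_3=0$ identically. So if $f$ satisfies $E_0,E_1,E_2$, then $w(-a)r_3=0$, and $r_3=0$ because $w(-a)\neq 0$.

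This one identity handles both the $\lambda$-dependence and the $g$-bookkeeping, with no need to substitute the explicit solution. It is exactly the paper's check that the vector $(w(z_1-z_2),w(z_2-z_3),w(z_3-z_0),w(z_0-z_1))$, which spans the left kernel, annihilates the right-hand side.
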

	\begin{proof} First, if $f(z)$ satisfies \eqref{beq}, then
$$\alpha\int_{z_0z_1z_2z_3}f\delta z=\sum_{j=0}^3 \dfrac{g(z_j)-g(z_{j+1})}{2}=0,$$
and the Cauchy-Riemann equation \eqref{CR} follows.
	Next, assume that \eqref{nonvan1} is in force, so that $$w(z_j-z_{j+1})w(z_{j+1}-z_j)\not=0\quad j=0,\dots,3,$$ and	
	consider the matrix of the linear system \eqref{beq}:
	\[W=\begin{pmatrix}
	w(z_0-z_1)	&-w(z_1-z_0)	&0	&0\\
	0	&w(z_1-z_2)			&-w(z_2-z_1)	&0\\
	0	&0			&w(z_2-z_3)		&-w(z_3-z_2)\\
	-w(z_0-z_3)	&0		&0		&w(z_3-z_0)
	\end{pmatrix}.\]
	 Note that $\rank(W)\geq3.$   Since $z_0z_1z_2z_3$ is a rhombus,
	\begin{equation}\label{rhombus}\sum_{j=0}^3(-1)^jz_j=0,\end{equation} and hence
	\begin{gather*}\begin{pmatrix}
	w(z_1-z_2)&
	w(z_2-z_3)&
	w(z_3-z_0)&
	w(z_0-z_1)
	\end{pmatrix}W=0,\\
	W\begin{pmatrix} w(z_2-z_3)w(z_3-z_0)\\w(z_0-z_1)w(z_3-z_0)\\w(z_0-z_1)w(z_1-z_2)\\w(z_1-z_2)w(z_2-z_3)\end{pmatrix}=0.\end{gather*}
In particular, $\rank(W)=3,$
\begin{gather*}
\ker(W)=\spa\left\{\begin{pmatrix} w(z_2-z_3)w(z_3-z_0)\\w(z_0-z_1)w(z_3-z_0)\\w(z_0-z_1)w(z_1-z_2)\\w(z_1-z_2)w(z_2-z_3)\end{pmatrix}\right\},\\ \ker(W^*)
=\spa\left\{ \begin{pmatrix}
	w(z_1-z_2)&
	w(z_2-z_3)&
	w(z_3-z_0)&
	w(z_0-z_1)
	\end{pmatrix}^*\right\}.
	\end{gather*}
 In order to establish the existence of $f(z),$ it suffices to check that
$$ \begin{pmatrix}
	w(z_1-z_2)&
	w(z_2-z_3)&
	w(z_3-z_0)&
	w(z_0-z_1)
	\end{pmatrix}
	\begin{pmatrix} g(z_0)-g(z_1)\\
	g(z_1)-g(z_2)\\
	g(z_2)-g(z_3)\\
	g(z_3)-g(z_0)\end{pmatrix}=0.$$
However, in view of \eqref{rhombus}, the left hand side of the last equality simplifies to
$$\alpha\left((z_1-z_3)(g(z_0)-g(z_2))-(z_0-z_2)(g(z_1)-g(z_3))\right),$$
which, in view of the Cauchy-Riemann equation \eqref{CRg}, is equal to $0.$ 
	\end{proof}

 \begin{theorem} \label{fwdthm} The kernel and range of $Z_+$ can be characterized as follows:
$$\ker(Z_+)=\{0\},\quad
\ran(Z_+)=\{f\in \cH(\Lambda) : f(0)=0\}.
$$
\end{theorem}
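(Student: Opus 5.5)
The inclusion $\ran(Z_+)\subseteq\{f\in\cH(\Lambda): f(0)=0\}$ is immediate from Definition \ref{fwd}, since $(Z_+f)(0)=0$. Everything else rests on rewriting the equation $Z_+f=g$ edge by edge. For an edge $(a,b)$ of $\Lambda$,
$$(Z_+f)(a)-(Z_+f)(b)=w(a-b)f(a)-w(b-a)f(b),\qquad w(z)=\tfrac{z-1}{2}.$$
This is exactly the system \eqref{beq} of Lemma \ref{LSE} with $\alpha=1/2$ and $\beta=-1/2$. Conversely, if a function $f$ satisfies these edge relations for a DA $g$ with $g(0)=0$, then $Z_+f-g$ is constant on the connected graph $\Lambda$. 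That constant vanishes at $0$, so $Z_+f=g$. I therefore plan to solve the edge system, and then use Lemma \ref{LSE}(1) face by face (with $g$ DA) to conclude that $f$ is DA.

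The edge system behaves differently according to the direction of the edge. If $b-a\neq\pm1$, both coefficients $w(a-b)$ and $w(b-a)$ are nonzero, so the relation determines $f(b)$ from $f(a)$ and vice versa. If $b-a=1$, then $w(b-a)=0$ and the relation collapses to the explicit formula
$$f(a)=g(b)-g(a).$$
Uniqueness, and hence $\ker(Z_+)=\{0\}$, follows. Take any $z$ and use Proposition \ref{Propleash} with $e=1$ to get a $1$-leash $(z_0=z,\dots,z_N)$. The last edge gives $f(z_{N-1})=g(z_N)-g(z_{N-1})$. Then propagate backwards along the earlier edges, none of which has direction $\pm1$. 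So $f(z)$ is determined by $g$; in particular $g=0$ forces $f=0$.

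For existence I will define $f(z)$ by exactly this leash recipe and must show it is independent of the chosen leash. Once that holds, every edge relation is satisfied automatically. Edges of direction $\pm1$ hold by construction, since they can serve as the final step of a leash. For other edges, prepending the edge to a leash of its endpoint gives a leash of the other endpoint.

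Independence is checked one face at a time. On a face with no edge parallel to $1$, condition \eqref{nonvan1} holds, and Lemma \ref{LSE}(2) gives a unique solution for each normalization. Equivalently, propagating around the face is consistent, which is where the Cauchy--Riemann equation for $g$ enters. On a face $a,\,a+1,\,a+1+e,\,a+e$ containing edges parallel to $1$, the values $f(a)$ and $f(a+e)$ are forced directly. The $e$-edge relation joining them must then be verified, which I expect to reduce to the discrete Cauchy--Riemann equation \eqref{CR} for $g$ on that face; the value $f(a+1)$ is instead transported along the other $e$-edge.

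The main obstacle is the global step: showing that any two $1$-leashes of $z$ give the same value. Leashes may end on different $1$-edges, and between them one must deform across faces of the track(s) whose ties are parallel to $1$. I would first try to argue via the track structure guaranteed by Assumption \ref{mainass}. Two leashes of $z$ can be joined by a path that avoids $\pm1$ steps except inside such tracks. Within a single track, consecutive tie edges are linked by the rail relations, and their consistency is exactly the face check above. Combined with simple connectivity of the plane tiling, which gives a homotopy by elementary face moves, this should give well-definedness.
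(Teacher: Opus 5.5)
Your proposal follows essentially the same route as the paper. The kernel is handled by propagating the edge relations back along a $1$-leash. The range is handled by reducing $Z_+f=g$ to the edge relations (Step~1), pinning $f(a)=g(a+1)-g(a)$ at left endpoints of horizontal edges and checking the left-rail relation through the Cauchy--Riemann equation for $g$ (Step~2), and then propagating across faces without horizontal edges by Lemma \ref{LSE}(2) (Step~3). The paper's Step~3 is just as brief about global consistency. To close your flagged step, note that all $1$-leashes of $z$ end on ties of the same horizontal-tie track. The loops you need therefore lie in the horizontal-edge-free strip to the left of that track, and they should be contracted inside that strip, where every face satisfies \eqref{nonvan1}, rather than by an arbitrary homotopy in the plane. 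Such a homotopy could cross faces with horizontal edges, where there is no bijective transport.
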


\begin{proof}
Suppose first that $f\in\mathcal{H}(\Lambda)$ is such that $Z_+f=0$. Then 
	\[\frac{f(z)-f(0)}{2}=\int_0^zf\delta z,\quad z\in \bV(\Lambda).\]
Therefore, if $u,v\in \bV(\Lambda)$, then 
	\[f(v)-f(u)=2\int_u^v f\delta z.\]
	In particular, if the vertices $u$ and $v$ are adjacent, then
	\begin{equation}\label{keq}
	\frac{f(v)-f(u)}{v-u}=f(u)+f(v).
	\end{equation}
	Now, fix an arbitrary $z\in\bV(\Lambda)$, and let $(z=z_0,\ldots,z_N)$ be a $1$-leash of $z.$ Then (\ref{keq}) implies that
	\[f(z)=f(z_1)\frac{1+z_0-z_1}{1+z_1-z_0}=\ldots=f(z_N)\prod_{n=1}^N\frac{1+z_{n-1}-z_n}{1+z_n-z_{n-1}}=0,\]
	since $1+z_{N-1}-z_N=0$. Thus, $\ker(Z_+)=\lbrace0\rbrace.$
	As to the range of $Z_+$, the inclusion
	\[\ran(Z_+)\subset\lbrace f\in\cH(\Lambda): f(0)=0\rbrace\]
	follows immediately from the formula in Definition \ref{fwd}. To prove the opposite inclusion, assume that $g(z)$ is a DA function, such that $g(0)=0$. One needs to show that there exists a DA function $f(z)$, such that $Z_+f=g$. This can be done in a number of steps.\\
	\textbf{Step 1.} Let $f:\bV(\Lambda)\to\mathcal{V}$ be given. Then $f(z)$ is the pre-image of $g(z)$ under $Z_+$ if, and only if, the relation
	\begin{equation}\label{Deq}
	f(v)(1+u-v)-f(u)(1+v-u)=2(g(u)-g(v))	
	\end{equation}
	holds on every edge $(u,v)$ of $\Lambda$. Indeed, if $f\in\cH(\Lambda)$ is such that $Z_+f=g$, then for every pair $u,v\in\bV(\Lambda)$ it holds that
	\[g(u)-g(v)=Z_+f(u)-Z_+f(v)=\frac{f(v)-f(u)}{2}+\int_v^uf\delta z.\]
	In particular, if vertices $u$ and $v$ are adjacent, then
	\[g(u)-g(v)=\frac{f(v)-f(u)}{2}+\frac{f(v)+f(u)}{2}(u-v),\]
	which is equivalent to (\ref{Deq}). Conversely, if (\ref{Deq}) holds on every edge of $\Lambda$, then 
 $f(z)$ is a DA function by Lemma \ref{LSE} with $w(z)=1-z$ and Theorem \ref{CRthm}. Fix an arbitrary $z\in\bV(\Lambda)$ and choose a path $(z_0,\ldots,z_N)$ from $z_0=0$ to $z_N=z$. Applying (\ref{Deq}) to every edge of the path, one gets
	\begin{multline*}
	(Z_+f)(z)=\frac{f(0)-f(z)}{2}+\int_0^zf\delta z\\
	=\sum_{n=1}^N\frac{f(z_{n-1})-f(z_n)}{2}+\sum_{n=1}^N\frac{f(z_{n-1})+f(z_n)}{2}(z_n-z_{n-1})\\
		=\sum_{n=1}^N(g(z_n)-g(z_{n-1}))=g(z)-g(0)=g(z).	
		 \end{multline*}
\textbf{Step 2.} Whenever $(z,z+1)\in \bE(\Lambda)$, set
\[f(z)=g(z+1)-g(z).\]
Then (\ref{Deq}) holds on every tie and every left rail of every track with horizontal ties. To see this, note that the above equality is a special case of (\ref{Deq}) with $u=z, v=z+1$. Furthermore, if $z_0z_1z_2z_3\in\bF(\Lambda)$ is  such that $z_2=z_1+1$ and $z_3=z_0+1$, then
\begin{multline*}
(1+z_1-z_0)f(z_0)-(1+z_0-z_1)f(z_1)\\
	=(1+z_1-z_0)(g(z_0+1)-g(z_0))-(1+z_0-z_1)(g(z_1+1)-g(z_1))\\
	=(1+z_1-z_0)(g(z_0+1)-g(z_1))-(1+z_0-z_1)(g(z_1+1)-g(z_0))\\
				+2(g(z_1)-g(z_0)).
\end{multline*}
Since $g(z)$ is a DA function,  by Theorem \ref{CRthm} it satisfies the Cauchy-Riemann equation
on the face $z_0z_1z_2z_3:$
\[\frac{g(z_3)-g(z_1)}{z_3-z_1}=\frac{g(z_2)-g(z_0)}{z_2-z_0},\]
hence 
\begin{multline*}
(z_2-z_0)(g(z_3)-g(z_1))-(z_3-z_1)(g(z_2)-g(z_0))\\
=(1+z_1-z_0)(g(z_0+1)-g(z_1))-(1+z_0-z_1)(g(z_1+1)-g(z_0))=0,
\end{multline*}
and
(\ref{Deq}) holds on the edge $(z_0,z_1)$, as well.\\
\textbf{Step 3.} If $z_0z_1z_2z_3$ is a face without horizontal edges and if the value $f(z_0)$ is given, then by Lemma \ref{LSE} with $w(z)=1-z$ there is a unique way to assign the values $f(z_1), f(z_2), f(z_3)$ so that (\ref{Deq}) holds on every edge of the face $z_0z_1z_2z_3.$  Thus, one can use  relation \eqref{Deq} to extend the definition of $f(z):$ first, to the vertices with $1$-leash of length $2$ and then, by induction on the length of $1$-leash, -- to the whole of $\bV(\Lambda).$ \end{proof}

\begin{definition}\label{bwd} The backward shift operator $Z_-:\cH(\Lambda)\longrightarrow \cH(\Lambda)$ is defined as follows:
given $g\in\cH(\Lambda),$ $Z_-g$ is the unique  element of $\cH(\Lambda),$ such that
$$Z_+Z_- g(z)=g(z)-g(0).$$
\end{definition}

\begin{remark}\label{lcfwd}
Definition \ref{bwd} implies that, just like in the proof of  Theorem \ref{fwdthm}, the backward shift $f=Z_-g$ of a DA function $g(z)$ is completely determined by relation \eqref{Deq} that holds on every edge $(u,v)$ of $\Lambda.$ In particular, if $(z,z+1)\in\bE(\Lambda),$ then
$$Z_-g(z)=g(z+1)-g(z).$$
More generally,
for every $z\in\bV(\Lambda)$ there exist $N\in\mathbb{N}$, $c_1,\ldots, c_N\in\CC$ and $z_1,\ldots, z_N\in\bV(\Lambda)$ such that, for every DA function $g:\bV(\Lambda)\to\mathcal{V},$ 
	\[Z_-g(z)=\sum_{n=1}^Nc_ng(z_n).\]
\end{remark}

\begin{proposition}\label{bwdprop} Operator $Z_-$ is a left inverse of $Z_+,$ and the kernel of $Z_-$ consists of constant DA functions.
\end{proposition}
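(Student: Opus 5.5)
The plan is to use only Theorem \ref{fwdthm}, which says that $Z_+$ is injective and that its range is $\{g\in\cH(\Lambda): g(0)=0\}$, together with Definition \ref{bwd}. For the left-inverse property, fix $f\in\cH(\Lambda)$ and set $g=Z_+f$. By Definition \ref{fwd}, $g(0)=\frac{f(0)-f(0)}{2}+\int_0^0 f\delta z=0$. Definition \ref{bwd} then gives
$$Z_+Z_-g=g-g(0)=g=Z_+f,$$
so $Z_+(Z_-g-f)=0$. Since $\ker(Z_+)=\{0\}$, it follows that $Z_-Z_+f=f$.

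For the kernel, I first show that constants lie in it. Let $g\equiv c$ be a constant function; it is clearly DA, since every closed discrete integral of a constant vanishes. Then $Z_+Z_-g=g-g(0)=0$, and injectivity of $Z_+$ forces $Z_-g=0$. Conversely, suppose $Z_-g=0$. Then $g-g(0)=Z_+Z_-g=Z_+0=0$, so $g\equiv g(0)$ is constant.

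A subtle point is that Definition \ref{bwd} presupposes that $Z_-$ is well defined. This holds because $g-g(0)$ vanishes at $0$, so it lies in $\ran(Z_+)$ by Theorem \ref{fwdthm}, and its preimage is unique by injectivity. Once this is granted, none of the steps above is a real obstacle.
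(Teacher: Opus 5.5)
Your proof is correct and follows essentially the same route as the paper: you apply $Z_+$ to $Z_-Z_+f$, use $(Z_+f)(0)=0$ and the injectivity of $Z_+$ from Theorem \ref{fwdthm}, and you characterize the kernel through $Z_+Z_-g=g-g(0)$ together with that injectivity. Your remark on the well-definedness of $Z_-$ simply makes explicit what the paper already takes for granted in Definition \ref{bwd}.
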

\begin{proof}
Let $f\in\cH(\Lambda)$. In view of Theorem \ref{fwdthm} and Definition \ref{bwd} one has
\begin{align*}
	Z_+Z_-Z_+f(z)&=Z_+f(z)-Z_+f(0)=Z_+f(z), \\
	&Z_-Z_+f(z)=f(z).
\end{align*}
As to the kernel of $Z_-$, it follows immediately from Definition \ref{bwd} that
\[Z_-f(z)=0\quad\Longleftrightarrow\quad f(z)-f(0)=0\quad\Longleftrightarrow\quad f(z)=f(0).\]
\end{proof}

\begin{remark}\label{contrem}
The structural identities 
\begin{equation}\label{structure}
Z_-Z_+f(z)=f(z),\quad Z_+Z_-f(z)=f(z)-f(0)\end{equation}
mirror those in the classical setting of functions analytic in an open neighborhood of the origin, with
$$Z_+f(z)=zf(z),\quad Z_-f(z)=\dfrac{f(z)-f(0)}{z}.$$
\end{remark}

\section{Eigenfunction of the backward shift and DA polynomials}\label{eigen}

In what follows, $\bS(\Lambda)$ denotes the  non-empty finite subset  of the complement of the open unit disk in the complex plane defined by
$$\bS(\Lambda)=\{t\in\CC : 2+t(1-e)=0\text{ for some }e\in \hat{\bE}(\Lambda)\}.$$ 

\begin{theorem}\label{genfunthm} \begin{enumerate}
\item Every $t\in\CC\setminus\bS(\Lambda)$ 
 is an eigenvalue of the operator $Z_-.$ The corresponding eigenspace is one-dimensional; it is spanned by the DA function $e_t(z),$ which is determined as follows: $e_t(0)=1,$ and
\begin{equation}\label{genfun}e_t(z)=
\prod_{k=1}^N\dfrac{2+t(1+z_k-z_{k-1})}{2+t(1+z_{k-1}-z_k)}\end{equation}
for $z\not=0,$ where $(z_0,z_1,\dots z_N)$ is any path from $z_0=0$ to $z_N=z.$
\item If $t\in\bS(\Lambda),$ then the operator $tI-Z_-$ is a bijection from $\cH(\Lambda)$ to itself.
\end{enumerate}
\end{theorem}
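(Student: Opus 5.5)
Throughout I use the edge description of $Z_-$ (Remark \ref{lcfwd}): $f=Z_-g$ if and only if \eqref{Deq} holds on every edge $(u,v)$ of $\Lambda$.

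\textbf{Eigenvalue equation on an edge.} Suppose $Z_-g=tg$ for some $t$. Then \eqref{Deq} with $f=tg$ gives, on every edge $(u,v)$,
$$g(v)\bigl(2+t(1+u-v)\bigr)=g(u)\bigl(2+t(1+v-u)\bigr).$$
Conversely, if a function $g$ satisfies this relation on every edge, then $f=tg$ satisfies \eqref{Deq}. Lemma \ref{LSE} with $w(z)=1-z$ and $g$ replaced by $2g$ (this requires $g$ DA), together with Step 1 of the proof of Theorem \ref{fwdthm}, then shows $Z_+(tg)=g-g(0)$, that is, $Z_-g=tg$.

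\textbf{Part (1): existence of $e_t$.} For $t\notin\bS(\Lambda)$ and any edge direction $e$, we have $2+t(1-e)\neq0$. Since $-e$ is also a direction, $2+t(1+e)\ne0$ as well. Hence both coefficients in the edge relation are nonzero, and $g$ propagates uniquely along paths by the quotient formula \eqref{genfun}. This gives one-dimensionality and the formula, provided it is well defined.

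Well-definedness means the product of the edge ratios around every closed path equals $1$. Around a backtracking edge the factors cancel. Every closed path in the simply connected lattice is a product of face boundaries and backtracks, so it suffices to check a single rhombus $z_0z_1z_2z_3$. There the edge vectors are $a,b,-a,-b$, and the product is
$$\frac{(2+t+ta)(2+t+tb)(2+t-ta)(2+t-tb)}{(2+t-ta)(2+t-tb)(2+t+ta)(2+t+tb)}=1.$$

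\textbf{Part (1): $e_t$ is DA.} I would verify the Cauchy--Riemann equation \eqref{CR} on the face directly. Write $c=2+t$ and normalize $e_t(z_0)=1$. Then
$$e_t(z_1)=\frac{c+ta}{c-ta},\quad e_t(z_3)=\frac{c-tb}{c+tb},\quad e_t(z_2)=\frac{(c+ta)(c+tb)}{(c-ta)(c-tb)}.$$
The identity
$$\frac{e_t(z_2)-1}{a+b}=\frac{e_t(z_1)-e_t(z_3)}{a-b}$$
is a rational identity in $a,b$, which I would check by clearing denominators. Both sides should reduce to $2ct/\bigl((c-ta)(c-tb)\bigr)$. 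I expect this computation to be the main technical point, but it is routine. Alternatively, Lemma \ref{LSE}(1) applies with $w(z)=1-z$ and $2g$ on the right, since $f=tg$ satisfies \eqref{beq} with right-hand side $2g$. That circularity is avoided by the direct check.

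Finally, $t=0$ gives $e_0\equiv1$, consistent with $\ker Z_-$ being the constants (Proposition \ref{bwdprop}).

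\textbf{Part (2): injectivity.} Let $t\in\bS(\Lambda)$, with $2+t(1-e)=0$ for some $e\in\hat{\bE}(\Lambda)$. Suppose $Z_-g=tg$. For any $z$, take an $e$-leash $(z_0=z,\dots,z_N)$ from Proposition \ref{Propleash}. On the final edge the coefficient of $g(z_{N-1})$ is $2+t(1+z_N-z_{N-1})=2+t(1+e)$. Since $t\ne0$ and $e\neq0$, this is nonzero, while the coefficient of $g(z_N)$ vanishes. Hence $g(z_{N-1})=0$.

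For the earlier edges, the directions are not $\pm e$. Their coefficients $2+t(1\pm e')$ could still vanish for another direction $e'$; the caveat below deals with this. Assuming they do not, propagating back gives $g(z)=0$.

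\textbf{Part (2): surjectivity.} Given $h$, I must solve $(tI-Z_-)g=h$. Applying $Z_+$ gives $tZ_+g-g+g(0)=Z_+h$, which on each edge becomes a relation of the form
$$g(v)\bigl(2+t(1+u-v)\bigr)-g(u)\bigl(2+t(1+v-u)\bigr)=\text{(known data from }h\text{)}.$$
I would mimic Steps 2 and 3 of the proof of Theorem \ref{fwdthm}, but with the special direction $e$ in place of $1$. On edges $(z,z+e)$ the relation determines $g(z)$ explicitly from $h$, because the coefficient of $g(z+e)$ vanishes. Faces are then filled in by Lemma \ref{LSE}, inducting on $e$-leash length; the lemma shows the Cauchy--Riemann equation holds.

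\textbf{Main obstacle.} The hard part is controlling degenerate coefficients when several directions $e'$ satisfy $2+t(1\pm e')=0$ simultaneously, and verifying hypothesis \eqref{nonvan1} on the faces used in the induction. The fix I would try first is to choose leashes avoiding all bad directions, relying on Assumption \ref{mainass}.
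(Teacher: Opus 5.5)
\textbf{The gap is in part (2).} As written, your injectivity argument is conditional (``assuming they do not''). Your surjectivity argument also depends on checking \eqref{nonvan1} on the faces used in the induction, and you leave that open as the main obstacle. The missing observation is one line: $2+t(1-e')=0$ forces $e'=1+2/t$. So for $t\in\bS(\Lambda)$ there is exactly one $s\in\hat{\bE}(\Lambda)$ with $2+t(1-s)=0$, and $2+t(1+e')=0$ holds only for $e'=-s$. Hence $2+t(1\pm d)\neq0$ for every edge vector $d\neq\pm s$.

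This completes injectivity. An $s$-leash $(z_0=z,\dots,z_N)$ has no edge parallel to $s$ before the last one. So from $g(z_{N-1})=0$ (the coefficient there is $2+t(1+s)=2ts\neq0$) you can propagate back to $g(z)=0$. It also settles \eqref{nonvan1}. For $w(z)=2+t(1-z)$, i.e.\ $\alpha=-t$, $\beta=2+t$, that condition says precisely that $2+t(1\pm d)\neq0$ for the edge vectors $d$ of the face. So it holds on every face with no edge parallel to $s$, and these are exactly the faces you fill in via Lemma \ref{LSE} after fixing the values on the $s$-edges.

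The remedy you propose (leashes avoiding several bad directions, via Assumption \ref{mainass}) is unnecessary. Proposition \ref{Propleash} would not supply it anyway, since it only avoids $\pm e$ for a single $e$. With this observation inserted, your part (2) is the paper's argument.

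\textbf{Part (1) is correct.} The paper gets it more cheaply by reading the edge relation as \eqref{beq} with $w(z)=2+t(1-z)$ and zero right-hand side. Lemma \ref{LSE}(2) then gives solvability on each face, and Lemma \ref{LSE}(1) yields the Cauchy--Riemann equation with no computation and no circularity (for $t\neq0$; $e_0\equiv1$). The circularity you worried about comes only from your choice $w(z)=1-z$ with right-hand side $2g$. On the other hand, your face-product check combined with simple connectivity makes explicit the local-to-global step that the paper leaves implicit.

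There is a sign slip in your direct check. With edge vectors $a,b,-a,-b$ one has $z_3=z_0+b$, so $e_t(z_3)=(c+tb)/(c-tb)$, not its reciprocal. With the correct value both sides equal $2ct/\bigl((c-ta)(c-tb)\bigr)$, as you predicted; with the value you wrote they do not.
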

\begin{proof}
First, let $t\in\mathbb{C}\setminus \bS(\Lambda)$, and consider a function $e_t: \bV(\Lambda)\to\mathcal{V}$. Then $e_t\in\mathcal{H}(\Lambda)$ and $Z_{-}e_t=te_t$ if, and only if, the relation
\begin{equation}\label{dapropii}
e_t(u)(2+t(1+v-u))=e_t(v)(2+t(1+u-v))
\end{equation}
holds on every edge $(u,v)$ of $\Lambda$. Indeed, if $e_t(z)$ is a DA function, then, in view of the structural identities \eqref{structure}, the equality
\begin{equation}\label{eigenbwd}Z_{-}e_t(z)=te_t(z)\end{equation}
is equivalent to
\begin{equation}\label{eigenfwd}e_t(z)-e_t(0)=tZ_+e_t(z).\end{equation}
This last equality holds if, and only if, for every pair of adjacent vertices $u$ and $v$ one has
\begin{multline*}
e_t(u)-e_t(v)=t\left(Z_+e_t(u)-Z_+e_t(v)\right)=t\left(\frac{e_t(v)-e_t(u)}{2}+\int_v^ue_t\delta z\right)\\
=t\left(\frac{e_t(v)-e_t(u)}{2}+\frac{e_t(v)+e_t(u)}{2}(u-v)\right),
\end{multline*}
which is equivalent to \eqref{dapropii}. Since $t\notin\bS(\Lambda)$, it follows from Lemma \ref{LSE} with $w(z)=2+t(1-z)$ that there exists a unique  function $e_t(z)$ satisfying \eqref{dapropii} on every edge of $\Lambda$ and normalized by $e_t(0)=1.$ Moreover, this function $e_t(z)$ is DA by Theorem \ref{CRthm}. Combining relations \eqref{dapropii} for the edges of an arbitrary path $(z_0,z_1,\ldots,z_N)$  from $z_0=0$ to $z_N=z$ yields formula \eqref{genfun}.

The proof of statement (2) of the theorem is very similar to that of Theorem \ref{fwdthm}. Let $t\in\bS(\Lambda),$ then there is $s\in\hat\bE(\Lambda)$ such that $t(1-s)=-2.$ If $f\in\ker(tI-Z_-),$ then 
on every edge $(u,v)$ of $\Lambda$ it holds that
\begin{equation}\label{dapropiii}
f(u)(2+t(1+v-u))=f(v)(2+t(1+u-v)).
\end{equation}
In particular, for every edge $(u,v),$ such that $u-v=s,$ $f(v)=0.$ Given an arbitrary $z\in\bV(\Lambda),$ applying \eqref{dapropiii} to every edge of a $s$-leash of $z$ leads to $f(z)=0.$ Thus $tI-Z_-$ is injective. To show that $tI-Z_-$ is also surjective, let $g\in\cH(\Lambda).$ Then $f\in\cH(\Lambda)$ satisfies $(tI-Z_-)f=g$ if, and only if,
for every edge $(u,v)\in\bE(\Lambda)$  it holds that
\begin{equation}\label{dapropiv}
f(u)(2+t(1+v-u))-f(v)(2+t(1+u-v))=G(v)-G(u),
\end{equation}
where $G=2Z_+g.$
The existence of such a function $f(z)$ can be first 
established for every $v\in\bV(\Lambda),$ such that $(v,v+s)\in\bE(\Lambda)$ and then, via Lemma \ref{LSE}, everywhere. 
\end{proof}

\begin{remark}\label{contrem2}
In the continuous setting, one has
$$\forall t\in\CC\quad Z_-e_t(z)=te_t(z),\text{ where } e_t(z)=\dfrac{1}{1-tz}.$$
Thus the exceptional set $\bS(\Lambda)$ has no continuous counterpart.
\end{remark}

In view of \eqref{genfun}, function $e_t(z)$ is a rational function in $t$ (of degree that depends on $z$), analytic in the complement of $\bS(\Lambda)$ and, in particular, in the open unit disk $\DD.$ Consider the Taylor expansion
\begin{equation}\label{genfunexp}e_t(z)=\sum_{n=0}^\infty t^n z^{(n)},\quad t\in\DD, z\in \bV(\Lambda).\end{equation}
\begin{proposition}\label{tayprop} Taylor coefficients $z^{(n)}$ in the expansion \eqref{genfunexp} are DA functions of $z$ with the following properties:
\begin{equation}
  \label{fwdbas}z^{(0)}=1\quad\text{and}\quad z^{(n)}=Z_+z^{(n-1)},\quad n\in
  \NN,\end{equation}
in particular,  $z^{(1)}=z,$
\begin{gather}
\label{bwdbas} Z_-z^{(0)}=0\quad\text{and}\quad Z_-z^{(n)}=z^{(n-1)},\quad n\in\NN,\\
\label{cauchy}\limsup_{n\rightarrow\infty}\sqrt[n]{|z^{(n)}|}\leq 1.
\end{gather}
Moreover, $z^{(0)},z^{(1)},z^{(2)},\dots$ form a linearly independent vector family in $\cH(\Lambda).$
\end{proposition}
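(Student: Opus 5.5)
The plan is to read off all properties from the eigenfunction identity \eqref{eigenfwd}, namely $e_t(z)-1=tZ_+e_t(z)$, by expanding both sides in powers of $t$. This requires interchanging $Z_+$ with the Taylor series in $t$.

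\textbf{Step 1: interchanging $Z_+$ with the series.} Fix $z$. The value $(Z_+f)(z)$ is a finite linear combination of values of $f$: it involves $f(0)$, $f(z)$ and the values of $f$ along a fixed path from $0$ to $z$. Likewise, Remark \ref{lcfwd} shows that $Z_-f(z)$ is a finite combination $\sum c_n f(z_n)$. Each $e_t(z_n)$ is analytic in $t\in\DD$, since $\bS(\Lambda)$ lies outside the open unit disk. Hence $(Z_+e_t)(z)$ and $(Z_-e_t)(z)$ are analytic in $t$, and their Taylor coefficients are $Z_\pm$ applied to the coefficients $z^{(n)}$, evaluated at $z$.

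\textbf{Step 2: analyticity and the recursions.} Each $z^{(n)}$ is DA. Indeed, the Cauchy--Riemann equation \eqref{CR} is linear in the four values, and it holds for $e_t$ for every $t\in\DD$; comparing Taylor coefficients gives \eqref{CR} for $z^{(n)}$. Setting $t=0$ in \eqref{genfun} gives $e_0\equiv1$, so $z^{(0)}=1$. Since $e_t(0)=1$ for all $t$, we also get $z^{(n)}(0)=0$ for $n\ge1$. Comparing the coefficients of $t^n$ in \eqref{eigenfwd} then yields $z^{(n)}=Z_+z^{(n-1)}$ for $n\ge1$, which is \eqref{fwdbas}. Applying $Z_-$ and using $Z_-Z_+=I$ from Proposition \ref{bwdprop} gives \eqref{bwdbas}, and $Z_-1=0$ holds because constants lie in $\ker Z_-$.

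\textbf{Step 3: identifying $z^{(1)}$.} By \eqref{fwdbas}, $z^{(1)}=Z_+1$. Directly from Definition \ref{fwd}, $Z_+1(z)=0+\int_0^z 1\,\delta z=z$, since the discrete integral of $1$ telescopes to $z-0$.

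\textbf{Step 4: the growth bound \eqref{cauchy}.} For fixed $z$, the function $t\mapsto e_t(z)$ is rational with poles only in $\bS(\Lambda)$, as one sees from \eqref{genfun}. All of these poles satisfy $|t|\ge1$. The radius of convergence of its Taylor series at $0$ is therefore at least $1$, and the Cauchy--Hadamard formula gives \eqref{cauchy}.

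\textbf{Step 5: linear independence.} Suppose $\sum_{n=0}^M c_n z^{(n)}=0$ is a nontrivial relation, and let $m$ be the largest index with $c_m\ne0$. Applying $Z_-^{m}$ and using \eqref{bwdbas} kills $z^{(n)}$ for $n<m$ and sends $z^{(m)}$ to $z^{(0)}=1$. This gives $c_m=0$, a contradiction.

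I expect the only delicate point to be Step 1, the justification that $Z_\pm$ commute with the $t$-expansion. This is handled pointwise by the finite-combination representation of Remark \ref{lcfwd} together with the analogous path representation of $Z_+$.
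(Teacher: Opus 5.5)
Your proposal is correct and follows essentially the paper's proof. You expand \eqref{eigenfwd} in powers of $t$ to get \eqref{fwdbas}, use $Z_-Z_+=I$ for \eqref{bwdbas}, apply Cauchy--Hadamard for \eqref{cauchy}, and use powers of $Z_-$ for linear independence. The only difference is minor: you kill the lower terms by applying $Z_-^m$ for the top index, whereas the paper reads off each coefficient as $\alpha_n=(Z_-^nf)(0)$ using $0^{(n)}=0$. Your extra justifications (the pointwise interchange of $Z_+$ with the series, the DA property of the coefficients, and $z^{(1)}=Z_+1=z$) fill in details the paper leaves implicit.
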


\begin{proof} From \eqref{genfun} it follows, in particular, that $z^{(0)}=e_0(z)=1$ and, by Proposition \ref{bwdprop}, $Z_-z^{(0)}=0.$ Comparing the coefficients of $t^n$ in the Taylor expansions on both sides of \eqref{eigenfwd}, one obtains \eqref{fwdbas} and, in view of the structural identities \eqref{structure},  \eqref{bwdbas} follows. Formula \eqref{cauchy} is the Cauchy-Hadamard estimate on the radius of convergence of the Taylor series \eqref{genfunexp}. As to the linear independence of $z^{(n)},$ observe that \eqref{fwdbas} implies, in particular,
$$0^{(n)}=0,\quad n\in\NN.$$
Therefore, if  $f(z)$ is a DA function of the form
$$f(z)=\sum_{n=0}^N\alpha_nz^{(n)},$$
where $\alpha_0,\alpha_1,\dots,\alpha_N\in\CC,$ 
then, in view of \eqref{bwdbas},
$$\alpha_n=Z_-^nf(0).$$
In particular, 
$$f=0\quad\Leftrightarrow\quad\alpha_0=\alpha_1=\dots=\alpha_N=0.$$
\end{proof}

\begin{definition} \label{dapoly} Elements of the subspace of $\cH(\Lambda)$ spanned by $$z^{(0)},z^{(1)},z^{(2)},\dots$$ are called DA polynomials. 
\end{definition}

\begin{remark} The space of DA polynomials was originally defined in \cite{rhombic}, using a different linearly independent family of DA functions. Unfortunately, it seems that the shift operators associated with the DA polynomial basis of \cite{rhombic} cannot be extended to the whole of $\cH(\Lambda).$
\end{remark}

It is useful to observe that estimate \eqref{cauchy} can be generalized as follows.

 \begin{proposition}\label{pwb}
 Let $\mathcal{V}$ be a normed vector space, and let $f:\bV(\Lambda)\longrightarrow\mathcal{V}$ be a DA function. Then, for every $z\in\bV(\Lambda),$ 
 \begin{equation}
 \label{gencau}
 \limsup_{n\rightarrow\infty}\sqrt[n]{\|Z_+^nf(z)\|}\leq 1.\end{equation}
 \end{proposition}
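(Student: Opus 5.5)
Since $Z_+$ is built from a local formula, I would show that $Z_+^nf(z)$ is a finite linear combination of the values of $f$ on a finite set of vertices, with coefficients that are themselves values of DA polynomials of degree at most $n$. The estimate \eqref{cauchy} would then transfer to $f$.

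\textbf{Step 1: a local formula for $Z_+$.} For a path $(z_0=0,z_1,\dots,z_N=z)$ fixed once and for all, Definition \ref{fwd} gives
$$(Z_+f)(z)=\sum_{k=1}^N\Big(\frac{f(z_{k-1})-f(z_k)}{2}+\frac{f(z_{k-1})+f(z_k)}{2}(z_k-z_{k-1})\Big).$$
So $Z_+f(z)$ depends only on the values of $f$ on the path. Iterating directly would let the path grow, which is inconvenient. Instead I would reduce to the scalar case.

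\textbf{Step 2: a representation through $e_t$.} For $t\in\DD$ small, consider the generating function
$$F_t(z)=\sum_{n\ge0}t^nZ_+^nf(z).$$
It formally satisfies $F_t-tZ_+F_t=f$, that is, $(I-tZ_+)F_t=f$. The claim \eqref{gencau} is equivalent to $F_t(z)$ converging for every $|t|<1$. Rather than prove convergence of the series directly, I would construct a DA solution $G_t$ of $(I-tZ_+)G_t=f$ depending analytically on $t\in\DD$, and identify its Taylor coefficients with $Z_+^nf$.

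Applying the edge computation from the proof of Theorem \ref{genfunthm}, the equation $G-tZ_+G=f$ holds if and only if $G(0)=f(0)$ and, on every edge $(u,v)$,
$$G(u)\bigl(2+t(1+v-u)\bigr)-G(v)\bigl(2+t(1+u-v)\bigr)=2\bigl(f(u)-f(v)\bigr).$$
For $t\in\DD$ the coefficients $2+t(1\pm e)$ never vanish, because $|1\pm e|\le2$ and $|t|<1$. So along a fixed path from $0$ to $z$ these relations determine $G_t(z)$ recursively and uniquely. Each step divides by $2+t(1+u-v)$, so $G_t(z)$ is a rational function of $t$ that is analytic on $\DD$, with values in $\mathcal{V}$ (indeed in the finite-dimensional span of the values of $f$ on the path).

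\textbf{Step 3: consistency.} I must check that $G_t$ is well defined, i.e. independent of the path, and DA. Uniqueness is immediate: the difference of two solutions is an element of $\ker(I-tZ_+)$ vanishing at $0$, hence zero by the path recursion. For existence I would mimic Lemma \ref{LSE}(1) with $w(z)=2+t(1-z)$ and right-hand side the DA function $2f$. The compatibility around each face is exactly the Cauchy--Riemann condition \eqref{CRg} for $f$, and $w$ has no zeros on the edge differences since $t\in\DD$. Path independence around faces, together with simple connectivity of the tessellation, gives global well-definedness, and part (1) of Lemma \ref{LSE} gives that $G_t$ is DA.

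\textbf{Step 4: conclusion.} Expanding $G_t(z)=\sum_n t^ng_n(z)$ for small $|t|$ and comparing coefficients in $G_t=f+tZ_+G_t$ gives $g_0=f$ and $g_n=Z_+g_{n-1}$. Here $Z_+$ acts coefficientwise because, by Step 1, it is a finite linear combination at each point. Hence $g_n=Z_+^nf$. Since $G_t(z)$ is analytic in $t$ on $\DD$ with values in a finite-dimensional subspace of $\mathcal{V}$, the Cauchy--Hadamard theorem yields \eqref{gencau}.

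The main obstacle is Step 3: verifying face compatibility for the inhomogeneous system, i.e. that the existence part of Lemma \ref{LSE}(2) applies. That lemma requires the nonvanishing condition \eqref{nonvan1}, which here holds throughout $\DD$. I would first try reading off its proof verbatim with $g=2f$.
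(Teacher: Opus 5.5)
Your argument is correct, but it takes a different route from the paper.

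\textbf{The paper's proof.} It proceeds by direct induction on the graph distance from $0$. The case $z=0$ is trivial because $Z_+^nf(0)=0$ for $n\ge1$. Suppose $\|Z_+^nf(u)\|\le(1+\epsilon)^n$ for $n\ge N$, and let $v$ be adjacent to $u$. Relation \eqref{Deq} for the pair $(Z_+^nf,Z_+^{n+1}f)$ on the edge $(u,v)$, together with $|1\pm(u-v)|\le2$, gives $\|Z_+^{n+1}f(v)\|\le\|Z_+^nf(v)\|+(2+\epsilon)(1+\epsilon)^n$. Summing this recursion yields $\limsup_n\sqrt[n]{\|Z_+^nf(v)\|}\le1+\epsilon$.

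\textbf{Your proof.} You instead extend the mechanism the paper uses for \eqref{cauchy}: for $f\equiv1$ your $G_t$ is exactly $e_t$, since \eqref{eigenfwd} says $(I-tZ_+)e_t=1$.

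\textbf{Comparison.} Your route yields more structure. Each $Z_+^nf(z)$ is a combination of values of $f$ along a path, with coefficients that are Taylor coefficients of scalar rational functions whose poles lie in $\bS(\Lambda)$. The paper's argument is purely local and needs only crude norm bounds. It makes no appeal to Lemma \ref{LSE} or to a global consistency (simple connectivity) argument.

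\textbf{Step 3 is unnecessary.} The step you single out as the main obstacle can be dropped. Multiply \eqref{Deq} for $(Z_+^nf,Z_+^{n+1}f)$ by $t^{n+1}$ and sum over $n$. This shows that the formal series $\Phi_z(t)=\sum_{n\ge0}t^nZ_+^nf(z)$ satisfy your edge relation in $\mathcal{V}[[t]]$, with $\Phi_0=f(0)$. Since $2+t(1+u-v)$ is invertible in $\CC[[t]]$, this relation determines $\Phi_z$ along any single path from $0$ to $z$. Hence $\Phi_z$ is the Taylor series of the rational function your recursion produces, and Cauchy--Hadamard applies without constructing $G_t$ globally.
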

 \begin{proof}
 For $z=0$ \eqref{gencau} follows trivially from Theorem \ref{fwdthm}. Assume that \eqref{gencau} holds for $z=u$ and  let $\epsilon>0,$ then there exists $N\in\NN$ such that, for all $n\geq N,$
 $$\|Z_+^nf(u)\|\leq (1+\epsilon)^n.$$ 
 Furthermore, let $v\in\bV(\Lambda)$ be adjacent to $u.$ Then  (see \eqref{Deq})
 $$Z_+^{n}f(v)(1+u-v)- Z_+^{n}f(u)(1+v-u)=2(Z_+^{n+1}f(u)-Z_+^{n+1}f(v)).$$
 Since $|u-v|=1,$ it follows  that, for all $n\geq N,$
 $$\|Z_+^{n+1}f(v)\|\leq \|Z_+^nf(v)\|+(2+\epsilon)(1+\epsilon)^n,$$
 and, therefore, \begin{multline*}
 \|Z_+^nf(v)\|\leq \|Z_+^Nf(v)\|+(2+\epsilon)\dfrac{(1+\epsilon)^n-(1+\epsilon)^N}{\epsilon}
\\  \leq\left(\dfrac{\|Z_+^Nf(v)\|}{(1+\epsilon)^N}+\dfrac{2+\epsilon}{\epsilon}\right)(1+\epsilon)^n.\end{multline*}
 Thus 
 $$ \limsup_{n\rightarrow\infty}\sqrt[n]{\|Z_+^nf(z)\|}\leq 1+\epsilon.$$
 Since $\epsilon>0$ is arbitrary, \eqref{gencau} holds for $z=v,$ as well, and, by induction on the distance from $z$ to $0,$ for all $z\in\bV(\Lambda).$
 \end{proof}

One of the main obstacles in the study of DA functions is that the point-wise product of two DA functions is not a DA function, in general. One possible way to deal with this issue is to introduce 
a convolution product associated with the DA polynomial basis $z^{(n)}.$ In what follows, $\cM,\cN,\cK$ are Hilbert spaces and 
$\bL(\cM,\cN)$ denotes the Banach space of bounded linear operators from $\cM$ to $\cN.$

\begin{definition}\label{convoproddef}
The convolution product $\odot$ of a $\bL(\cM,\cN)$-valued DA polynomial 
$$p(z)=\sum_{n=0}^N A_nz^{(n)}$$
with a  $\bL(\cK,\cM)$-valued DA function $f(z)$  is given by
$$(p\odot f)(z):=\sum_{n=0}^N A_n(Z_+^nf)(z).$$
Similarly,  if $g(z)$ is a $\bL(\cN,\cK)$-valued DA function, then 
$$(g\odot p)(z):=\sum_{n=0}^N (Z_+^ng)(z)A_n.$$
\end{definition}

Note that the space of $\CC$-valued DA polynomials equipped with the convolution product $\odot$
is a commutative ring, where
$$z^{(m)}\odot z^{(n)}=z^{(m+n)}.$$

\begin{theorem}\label{resolve} Let $A\in\bL(\cM).$  Then the DA polynomial $I-zA$ is  $\odot$-invertible if,  and only if, 
\begin{equation}\label{excl}\bS(\Lambda)\cap\sigma(A)=\emptyset.\end{equation}
In this case the  $\odot$-inverse  of   $I-zA$  is given by
\begin{equation}\label{resA}(I-zA)^{-\odot}=\prod_{k=1}^N\big((2I+(1+z_k-z_{k-1})A)(2I+(1+z_{k-1}-z_k)A)^{-1}\big),\end{equation}
where $(z_0,z_1,\dots, z_N)$ is any path from $z_0=0$ to $z_N=z.$
\end{theorem}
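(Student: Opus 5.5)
We first reformulate $\odot$-invertibility of $I-zA$ as a solvability statement for a difference equation on edges, in the spirit of the proofs of Theorem \ref{fwdthm} and Theorem \ref{genfunthm}. By Definition \ref{convoproddef}, for an $\bL(\cM)$-valued DA function $F$ we have $(I-zA)\odot F=F-AZ_+F$ and $F\odot(I-zA)=F-(Z_+F)A$. The condition $F-AZ_+F=I$ means $F(0)=I$, and it is equivalent to $F(u)-F(v)=A(Z_+F(u)-Z_+F(v))$ on every edge $(u,v)$. Expanding $Z_+F(u)-Z_+F(v)$ exactly as in the derivation of \eqref{dapropii} turns this into
\begin{equation*}
(2I+(1+v-u)A)F(u)=(2I+(1+u-v)A)F(v),\quad (u,v)\in\bE(\Lambda).
\end{equation*}
Conversely, any $F$ with $F(0)=I$ satisfying this on all edges is DA by Lemma \ref{LSE}(1) and Theorem \ref{CRthm}, and it satisfies $F-AZ_+F=I$. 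The same holds for the right version, with $A$ acting on the right.

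\textbf{Sufficiency.} Assume \eqref{excl}. For every edge direction $e$, the operator $2I+(1-e)A$ is invertible: if $e=1$ it is $2I$; otherwise $-2/(1-e)\in\bS(\Lambda)$ lies outside $\sigma(A)$. Hence we may define $F(z)$ by the product formula \eqref{resA} along a path from $0$ to $z$. All factors are functions of the single operator $A$, so they commute, and the ordering in the product is irrelevant.

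Path independence reduces to closed paths. Each edge traversed back and forth contributes a cancelling pair of factors, and every closed path in the plane graph $\Lambda$ is generated by face boundaries. So it suffices to check a rhombus $z_0z_1z_2z_3$ with edge vectors $a,b,-a,-b$. Writing $\phi(e)=(2I+(1+e)A)(2I+(1-e)A)^{-1}$, we have $\phi(a)\phi(b)\phi(-a)\phi(-b)=I$ because $\phi(-e)=\phi(e)^{-1}$ and all factors commute.

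Thus $F$ is well defined, satisfies the edge relation, and is therefore a left inverse. Since the factors commute with $A$, the same function satisfies the right edge relation as well, so $F$ is a two-sided $\odot$-inverse. Alternatively, one can use the holomorphic functional calculus applied to the scalar $e_t$, valid on a neighborhood of $\sigma(A)$ that avoids $\bS(\Lambda)$.

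\textbf{Necessity.} This is the main obstacle. Suppose $G$ is a left inverse, so $G-AZ_+G=I$, and suppose $t\in\bS(\Lambda)\cap\sigma(A)$ with $t(1-s)=-2$ for some $s\in\hat\bE(\Lambda)$. Then $B=2I+(1-s)A=(1-s)(A-tI)$ is not invertible. On an edge $(u,v)$ with $v-u=s$, the relation reads $B\,G(v)=(2I+(1+s)A)G(u)$. Running along an $s$-leash from $0$ (Proposition \ref{Propleash}), all other factors should be invertible, which suggests the following argument.

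Propagate $G$ from $0$ to a vertex $z$ through edges whose directions $e\neq\pm s$. Then $G(z)$ equals a product $P$ of factors $(2I+(1-e)A)^{-1}(2I+(1+e)A)$, provided those are invertible. At the final $s$-edge this gives $B\,G(z_N)=CP$, with $C=2I+(1+s)A$ and $CP$ a function of $A$.

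If $t\in\sigma(A)$ is an eigenvalue or an approximate eigenvalue, then $CP$ maps approximate eigenvectors to vectors of size bounded below. The factor $2+t(1+s)=2+t(1-s)+2ts=2ts\neq0$, because $t\neq0$ and $s\neq0$. On the other hand, $B$ kills these vectors asymptotically, which yields a contradiction.

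Care is needed when intermediate factors are themselves non-invertible. To handle this, I would argue through the spectral mapping theorem for rational functions of $A$, and choose a boundary point $t$ of $\sigma(A)\cap\bS(\Lambda)$ that is an approximate point eigenvalue. The right-inverse case is handled symmetrically.
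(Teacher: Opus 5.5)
Your sufficiency argument is essentially sound, but the necessity part has a genuine gap.

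\textbf{Sufficiency.} Under \eqref{excl} every $2I+(1-e)A$, $e\in\hat\bE(\Lambda)$, is invertible. The factors are commuting rational functions of $A$, and $\phi(-e)=\phi(e)^{-1}$ gives path independence. One caveat: Lemma \ref{LSE}(1) with the operator $-A$ in place of $\alpha$ only yields $A\int_{z_0z_1z_2z_3}F\delta z=0$. That does not give the Cauchy--Riemann equation when $\ker A\neq\{0\}$. Take discrete analyticity of $F$ from your functional-calculus remark instead: the Cauchy--Riemann identity for $e_t(z)$ is an identity between rational functions of $t$.

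\textbf{Necessity.} There are three problems.

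(i) Propagating $G$ along the $s$-leash needs $2I+(1-e)A$ to be invertible for the intermediate directions. This fails whenever another point of $\bS(\Lambda)$ lies in $\sigma(A)$, and the appeal to the spectral mapping theorem does not say how to get past such an edge.

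(ii) Even when $P$ exists, in $BG(z_N)=CP$ the operator $B$ acts on $G(z_N)x_n$, not on $x_n$. Since $G(z_N)$ need not commute with $A$, approximate eigenvectors of $A$ give no contradiction. The relation $G=I+AZ_+G$ only says $\ran(CP)\subset\ran B$, so it can only detect non-surjectivity of $A-tI$. Approximate eigenvectors of $A$ belong with the mirror relation $G(z_N)B=PC$, which comes from $G=I+(Z_+G)A$. Each relation alone yields only a one-sided inverse of $tI-A$, so you must use both simultaneously, not ``symmetrically'' one at a time.

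(iii) A point of $\bS(\Lambda)\cap\sigma(A)$ in the approximate point spectrum need not exist. Take $A=\rho S$, with $S$ the unilateral shift and $\rho>\max\{|t|:t\in\bS(\Lambda)\}$. Then $\sigma(A)$ is the closed disk of radius $\rho$ and contains $\bS(\Lambda)$, while the approximate point spectrum is $\{|\lambda|=\rho\}$. Moreover, every factor $2I+(1-e)A$ with $e\neq 1$ is then non-invertible.

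\textbf{How the paper does it.} The two relations together give $Z_-F=AF=FA$, hence $(tI-Z_-)F=(tI-A)F=F(tI-A)$. For $t\in\bS(\Lambda)$, the operator $tI-Z_-$ is a bijection of $\cH(\Lambda)$ (Theorem \ref{genfunthm}(2)) and commutes with constant left and right multiplications. So $H=(tI-Z_-)^{-1}F$ satisfies $(tI-A)H=H(tI-A)=F$. Evaluating at $0$ gives $(tI-A)H(0)=H(0)(tI-A)=I$.

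\textbf{Repairing your leash idea.} This can be done without inverting anything. Write $L_e=2I+(1-e)A$ and $R_e=2I+(1+e)A$, which all commute, and $e_k=z_k-z_{k-1}$. The edge relations $L_{e_k}G(z_k)=R_{e_k}G(z_{k-1})$ along an $s$-leash $(0=z_0,\dots,z_N)$ multiply to $\bigl(\prod_{k}L_{e_k}\bigr)G(z_N)=q(A)$, where $q(\tau)=\prod_k(2+\tau(1+e_k))$. Here $q(t)\neq0$, since no $e_k$ equals $-s$. Now $L_{e_N}=(1-s)(A-tI)$ and $q(A)=q(t)I+(A-tI)h(A)$ for a polynomial $h$. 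Together these exhibit a right inverse of $A-tI$. The relation with $A$ on the right gives a left inverse in the same way.
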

\begin{proof}
Assume first that $I-zA$ is $\odot$-invertible. Then there is an $\bL(\cM)$-valued DA function $F(z),$ such that
$$F(z)\odot(I-zA)=(I-zA)\odot F(z)=I.$$
By definition of $\odot,$ the above equality can be re-written as
$$F(z)=I+(Z_+F)(z)A=I+A(Z_+F)(z).$$
In particular, $F(0)=I$ and
$$(Z_-F)(z)=F(z)A=AF(z).$$
 Therefore, for $t\in\CC,$
 $$(tI-Z_-)F(z)=F(z)(tI-A)=(tI-A)F(z).$$
 If $t\in\bS(\Lambda),$ then $(tI-Z_-)$ is a bijection and hence
 $$F(z)=((tI-Z_-)^{-1}F)(z)(tI-A)=(tI-A)((tI-Z_-)^{-1}F)(z).$$
 In particular,
 $$((tI-Z_-)^{-1}F)(0)(tI-A)=(tI-A)((tI-Z_-)^{-1}F)(0)=F(0)=I,$$
 and $(tI-A)$ is invertible in $\bL(\cM).$ Thus \eqref{excl} holds.
 
 The converse direction parallels the proof of Theorem \ref{genfunthm} with $A$ in place of $t.$
\end{proof}

\section{A reproducing kernel Hilbert space of DA functions}\label{RKHS-DA}
Our goal in this section is to characterize backward shift invariant reproducing kernel Hilbert spaces (RKHS) of vector-valued DA functions, adapting the ideas of \cite{ad} to the DA setting.

 Let $A\in\bL(\cM)$ be such that $\sigma(A)\cap\bS(\Lambda)=\emptyset,$ and let $C\in\bL(\cM,\cN).$ Consider the $\bL(\cM,\cN)$-valued DA function
\begin{equation}\label{absker1}F(z)=C(I-zA)^{-\odot},\quad z\in\bV(\Lambda),\end{equation}
and  the $\bL(\cN)$-valued positive kernel
\begin{equation}\label{absker2}K(z,w)=F(z)F(w)^*.\end{equation} The  following proposition is straightforward.
\begin{proposition}
The RKHS $\cH(K)$ of $\cN$-valued DA functions associated with the kernel \eqref{absker2} can be characterized as 
\begin{equation}\label{defhk}\cH(K)=\{F(z)\zeta:\zeta\in\cM\},\end{equation}
with the inner product given by
\begin{equation}\label{defhkprod}\langle F\zeta_1,F\zeta_2\rangle_{\cH(K)}=\langle(I-\pi_0)\zeta_1,\zeta_2\rangle_{\cM},\end{equation}
where $\pi_0\in\bL(\cM)$ is the orthogonal projection onto
$$\cM_0=\bigcap_{z\in\bV(\Lambda)}\ker F(z).$$
\end{proposition}

\begin{theorem}\label{bwdinvthm} $Z_-$ is a bounded operator on $\cH(K).$ Moreover, for every $t\in\bS(\Lambda)$ the operator
$(tI-Z_-)$ is invertible in $\bL(\cH(K)).$
\end{theorem}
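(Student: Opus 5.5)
The plan is to compute $Z_-$ explicitly on the elements $F\zeta$ and then transfer the statement to an operator on $\cM$. Since $F(z)=C(I-zA)^{-\odot}$, the defining identity of the $\odot$-inverse gives
$$(I-zA)^{-\odot}=I+\big(Z_+(I-zA)^{-\odot}\big)A=I+A\,Z_+(I-zA)^{-\odot}$$
(as in the proof of Theorem \ref{resolve}). Multiplying on the left by $C$ and applying this to $\zeta$, we get $F(z)\zeta=C\zeta+Z_+\big(F(\cdot)A\zeta\big)(z)$, using that $Z_+$ commutes with left multiplication by the constant operator $C$. The structural identities \eqref{structure} then give
$$Z_-(F\zeta)=F A\zeta,\qquad \zeta\in\cM .$$
In particular $Z_-$ maps $\cH(K)$ into itself. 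Well-definedness needs one check: if $F\zeta=0$, that is $\zeta\in\cM_0$, then $FA\zeta=Z_-(F\zeta)=0$. So $A\cM_0\subset\cM_0$, and the map $F\zeta\mapsto FA\zeta$ is consistent.

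For boundedness I use \eqref{defhkprod}. The map $U:(I-\pi_0)\zeta\mapsto F\zeta$ is a unitary from $\cM_0^\perp$ onto $\cH(K)$. Under $U$, the operator $Z_-$ corresponds to the compression $B=(I-\pi_0)A|_{\cM_0^\perp}$. To confirm this, write $\zeta=(I-\pi_0)\zeta+\pi_0\zeta$. Then $FA\zeta=FA(I-\pi_0)\zeta+FA\pi_0\zeta$, and the second term vanishes because $A\cM_0\subset\cM_0$. Hence $\|Z_-F\zeta\|=\|(I-\pi_0)A(I-\pi_0)\zeta\|\le\|A\|\,\|F\zeta\|$, so $Z_-$ is bounded with $\|Z_-\|\le\|A\|$.

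For the spectral claim, fix $t\in\bS(\Lambda)$. Relative to the decomposition $\cM=\cM_0\oplus\cM_0^\perp$, the operator $A$ is block upper triangular, since $\cM_0$ is $A$-invariant:
$$A=\begin{pmatrix}A_{00}&A_{01}\\0&B\end{pmatrix}.$$
Then $tI-Z_-$ on $\cH(K)$ is unitarily equivalent to $tI-B$, so it suffices to show that $tI-B$ is invertible.

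Injectivity of $tI-Z_-$ on $\cH(K)$ follows directly from Theorem \ref{genfunthm}(2). Surjectivity within $\cH(K)$ is the main obstacle, because for a block triangular $A$ the invertibility of $tI-A$ does not by itself give invertibility of the corner $tI-B$ in infinite dimensions. I would argue it directly. Let $G=F\eta\in\cH(K)$. By Theorem \ref{genfunthm}(2) there is a unique DA function $f$ with $(tI-Z_-)f=G$. Since $t\notin\sigma(A)$, we may set $f=F(tI-A)^{-1}\eta$. This $f$ is DA, and by the formula $Z_-(F\zeta)=FA\zeta$,
$$(tI-Z_-)f=F(tI-A)(tI-A)^{-1}\eta=G.$$
By uniqueness this is the preimage, and it lies in $\cH(K)$. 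So $tI-Z_-$ is a bijection of $\cH(K)$ and, being bounded, has a bounded inverse by the open mapping theorem. An explicit bound is also available: $\|(tI-Z_-)^{-1}F\eta\|\le\|(tI-A)^{-1}\|\,\|F\eta\|$ after replacing $\eta$ by $(I-\pi_0)\eta$, using that $(tI-A)^{-1}$ also leaves $\cM_0$ invariant. That invariance follows because $F(tI-A)^{-1}\zeta$ for $\zeta\in\cM_0$ is the unique preimage of $0$, hence equal to $0$.
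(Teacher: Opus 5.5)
Your proof is correct and follows the paper's route. You derive $Z_-(F\zeta)=FA\zeta$ from the $\odot$-inverse identity, deduce $A\cM_0\subset\cM_0$ and the bound $\|Z_-\|\le\|A\|$, and use the intertwining $(tI-Z_-)F=F(tI-A)$ to produce preimages; your explicit appeal to Theorem \ref{genfunthm}(2) for injectivity fills in a step the paper leaves implicit (note that for block upper-triangular $tI-A$ the corner is automatically surjective, so it is injectivity, not surjectivity, that the triangular structure alone fails to give).
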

\begin{proof} Since
$$
F(z)\odot(I-zA)=C,
$$
it follows that
$$Z_-F(z)=F(z)A.$$
Therefore, the subspace $\cM_0$ of $\cM$ is $A$-invariant, and $Z_-$ is an everywhere defined linear operator on $\cH(K).$ Furthermore, for $\zeta\in\cM$ one has
\begin{multline*}
\|Z_-F(z)\zeta\|_{\cH(K)}=\|(I-\pi_0)A\zeta\|_{\cM}=\|(I-\pi_0)A(I-\pi_0)\zeta\|_{\cM}\\
\leq\|A\|_{\bL(\cM)}\|(I-\pi_0)\zeta\|_{\cM}= \|A\|_{\bL(\cM)} \|F(z)\zeta\|_{\cH(K)},\end{multline*}
and hence $Z_-\in\bL(\cH(K)).$ Finally, if $t\in\bS(\Lambda),$ then $tI-A$ is invertible. Since $$
(tI-Z_-)F(z)=F(z)(tI-A),$$
$tI-Z_-$ is a bijection on $\cH(K)$ and hence invertible.
\end{proof}
A converse to Theorem \ref{bwdinvthm} can be formulated as follows.
\begin{theorem} \label{crt} Let $\cM$ be a RKHS of $\cN$-valued DA functions, such that $A=Z_-\in\bL(\cM)$ and $\bS(\Lambda)\cap\sigma(A)=\emptyset.$ Then, for every $f\in\cM$ and every $z\in\bV(\Lambda)$ the following point evaluation formula holds:
$$C(I-zA)^{-\odot}f=f(z),$$
where $C\in\bL(\cM,\cN)$ is given by $Cf=f(0).$ In particular, the reproducing kernel $K(z,w)$ of $\cM$ is  of the form  \eqref{absker1} -- \eqref{absker2}.
\end{theorem}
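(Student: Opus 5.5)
Fix $f\in\cM$ and set $G(z)=C(I-zA)^{-\odot}f$, which is an $\cN$-valued function on $\bV(\Lambda)$. Since $\sigma(A)\cap\bS(\Lambda)=\emptyset$, Theorem \ref{resolve} shows that $R(z)=(I-zA)^{-\odot}$ is a well-defined $\bL(\cM)$-valued DA function. The same theorem gives
$$R(z)=I+A(Z_+R)(z)=I+(Z_+R)(z)A.$$
Because $C$ and the evaluation at $f$ are fixed bounded linear maps, and $Z_+$ is built from point values and discrete integrals, $Z_+$ commutes with them. Therefore $G$ is DA and satisfies
$$G(z)=Cf+Z_+\big(C R(\cdot)Af\big)(z).$$
The plan is to show that $G=f$ by proving that both functions satisfy the same defining recursion and then invoking the uniqueness built into $Z_+$.

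\textbf{Step 1 (identity for $f$).} By the structural identity \eqref{structure}, $f=f(0)+Z_+Z_-f=Cf+Z_+(Af)$. Here $Af\in\cM$ is again a DA function, and $Z_-$ acting on functions coincides with $A$ on $\cM$ by hypothesis.

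\textbf{Step 2 (identity for $G$).} Applying the previous display with $f$ replaced by $Af$ gives $G_f=Cf+Z_+G_{Af}$, where $G_h:=CR(\cdot)h$. The map $\Phi:h\mapsto h-G_h$ is linear from $\cM$ to $\cH(\Lambda)$. Subtracting the two identities yields
$$\Phi(f)=Z_+\Phi(Af).$$
Iterating gives $\Phi(f)=Z_+^n\Phi(A^nf)$ for every $n$. In particular $\Phi(f)(0)=0$, and by Theorem \ref{fwdthm} and Proposition \ref{bwdprop}, $Z_-^n\Phi(f)=\Phi(A^nf)$.

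\textbf{Step 3 (show $\Phi=0$).} This is the main obstacle: a pure recursion argument does not close, since $Z_+$ has no nonzero fixed structure that forces vanishing. I would use the eigenvalue structure and the spectral hypothesis instead. For $t$ in a neighborhood of $\infty$, or more safely for every $t\in\bS(\Lambda)$, the operator $tI-Z_-$ is a bijection of $\cH(\Lambda)$ by Theorem \ref{genfunthm}(2), and $tI-A$ is invertible on $\cM$.

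The preferred route is a direct edge-by-edge argument. I would show that $\Phi(f)(z)$ is determined linearly by the values $\Phi(h)(0)=0$, $h\in\cM$, through relation \eqref{Deq}, following the same leash-induction used for $\ker Z_+$. Take $s\in\hat\bE(\Lambda)$ with $t(1-s)=-2$ for some $t\in\bS(\Lambda)$. The relation $\Phi(f)=Z_+\Phi(Af)$ on an edge $(u,v)$ with $u-v=s$ reads
$$\Phi(f)(v)(1+s)-\Phi(f)(u)(1-s)=2\big(\Phi(Af)(u)-\Phi(Af)(v)\big).$$
Rewrite this as $(tI-A)$ applied inside $\Phi$: replacing $f$ by $(tI-A)^{-1}g$ lets one express $\Phi(g)(v)$ through the values of $\Phi$ at $u$. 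Then induct along an $s$-leash, starting at $0$ where everything vanishes, exactly as in the injectivity proofs of Theorems \ref{fwdthm} and \ref{genfunthm}(2). The delicate point, which I expect to require the most care, is bookkeeping this induction so that each step only uses vertices that have already been handled.

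\textbf{Step 4 (kernel).} Once $f(z)=CR(z)f$ holds, the point evaluation is $f\mapsto CR(z)f$, so $K(z,w)=CR(z)\,(CR(w))^*$. This is exactly the form \eqref{absker1}--\eqref{absker2}.
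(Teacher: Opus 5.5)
Steps 1, 2 and 4 are fine, and reducing the theorem to $\Phi=0$ is a viable route, different from the paper's. But Step 3, which carries the whole argument, does not work as written.

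Your displayed edge relation interchanges $f$ and $Af$. Applying \eqref{Deq} to $\Phi(f)=Z_+\Phi(Af)$ gives $\Phi(Af)(v)(1+u-v)-\Phi(Af)(u)(1+v-u)=2\bigl(\Phi(f)(u)-\Phi(f)(v)\bigr)$. Your version would instead require inverting $(1\pm s)I+2A$, which $\sigma(A)\cap\bS(\Lambda)=\emptyset$ does not control.

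More importantly, the leash induction is the wrong mechanism. In Theorems \ref{fwdthm} and \ref{genfunthm}(2) a leash is used to reach an edge on which a scalar coefficient vanishes, which forces a zero value with no initial condition. Here no coefficient vanishes. Your only initial datum sits at $0$, and a leash of $z$ need not pass through $0$. Also, a single direction $s$ (i.e.\ a single $t\in\bS(\Lambda)$) says nothing about edges in the other directions.

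The missing observation is that the recursion closes at the level of linear maps. Let $\Psi_z:\cM\to\cN$, $\Psi_z h=\Phi(h)(z)$. The corrected relation, valid for all $h\in\cM$, reads
\[
\Psi_v\bigl(2I+(1+u-v)A\bigr)=\Psi_u\bigl(2I+(1+v-u)A\bigr)\quad\text{for every edge }(u,v).
\]
For $e=v-u\neq 1$ one has $2I+(1-e)A=(1-e)(A-tI)$ with $t=-2/(1-e)\in\bS(\Lambda)$. So this operator is invertible on $\cM$ exactly because $\sigma(A)\cap\bS(\Lambda)=\emptyset$; for $e=1$ it is $2I$. Every point of $\bS(\Lambda)$ is used here, one per edge direction, not just a single $t$.

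Hence $\Psi_u=0$ implies $\Psi_v=0$. Since $\Psi_0=0$, induction along any path from $0$ gives $\Phi=0$, with no delicate bookkeeping. Equivalently, point evaluation $h\mapsto h(z)$ on $\cM$ obeys the same product recursion \eqref{resA} as $C(I-zA)^{-\odot}$, starting from $C$ at $z=0$.

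With this repair your argument is correct and considerably shorter than the paper's. The paper fixes a shortest path $0=z_0,\dots,z_N=z$ and puts $g_n=(I-z_nA)^{-\odot}f$. It encodes the edge relations in the matrix equation \eqref{mateq} for $X_N=(g_j(z_i))$ and proves $X_N$ symmetric by induction, so that $f(z)=g_0(z_N)=g_N(z_0)=Cg_N$. Your route needs neither shortest paths nor that symmetry lemma.
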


\begin{proof} Let $f\in\cM$ and  $z\in\bV(\Lambda).$ Denote
$$G(z)=(I-zA)^{-\odot}f.$$
 Then
$$
G(z)-AZ_+G(z)=f,\quad
G(0)=f,$$ and on every edge
 $(a,b)$ of $\Lambda$ the following equality holds:
 $$2(G(b)-G(a))=AG(a)(1+b-a)-AG(b)(1+a-b).$$
Now fix $z\in\bV(\Lambda)$ and choose a shortest path $(z_0,z_1,\dots,z_N)$ from $z_0=0$ to $z_N=z.$ If $N=0,$ then $z=0$ and
$$f(0)=Cf=CG(0)=C(I-zA)^{-\odot}f.$$ If $N\geq 1,$
denote
$$G(z_n)=g_n,\quad n=0,1,\dots,N.$$
Then $g_0=f$ and for every $n=1,\dots,N$ the following equalities hold:
$$
2(g_n-g_{n-1})=(1+z_n-z_{n-1})Z_-g_{n-1}-(1+z_{n-1}-z_n)Z_-g_n,$$
\begin{multline*}2Z_+(g_n-g_{n-1})=(1+z_n-z_{n-1})(g_{n-1}-g_{n-1}(0))-\\-(1+z_{n-1}-z_n)(g_n-g_n(0)).
\end{multline*}
Evaluate the functions related by the above equality at $z_{k-1}$ and $z_k$ for $k=1,\dots, N$ to conclude that
\begin{multline*}
(g_n(z_{k-1})-g_{n-1}(z_{k-1}))(1+z_k-z_{k-1})-\\-(g_n(z_k)-g_{n-1}(z_k))(1+z_{k-1}-z_k)\\=
(1+z_n-z_{n-1})(g_{n-1}(z_k)-g_{n-1}(z_{k-1}))-\\-(1+z_{n-1}-z_n)(g_n(z_k)-g_n(z_{k-1})),\end{multline*}
and hence
\begin{multline*}
(z_k-z_{k-1})(g_n(z_{k-1})-g_{n-1}(z_{k-1})+g_n(z_k)-g_{n-1}(z_k))\\=(z_n-z_{n-1})(g_{n-1}(z_k)-g_{n-1}(z_{k-1})+g_n(z_k)-g_{n}(z_{k-1})).
\end{multline*}
The above system of equations can be re-written in the matrix form as
\begin{multline}\label{mateq}E_N(I_N+Z_N-P_N)X_N(I_N-Z_N^\top -P_N)\\=(I-Z_N-P_N)X_N(I_N+Z_N^\top -P_N)E_N,\end{multline}	
where $E_N,I_N,P_N,Z_N,X_N$ are $(N+1)\times(N+1)$ matrices defined as follows:
\begin{gather*}
E_N=\diag(2,z_1-z_0,z_2-z_1,\dots,z_N-z_{N-1}),\\
I_N=\diag(1,1,\dots,1), \quad P_N=\diag(1,0,0,\dots,0),\\
Z_N=\begin{pmatrix}0&\cdots&\cdots&0\\
1&\ddots &&\vdots\\
&\ddots&\ddots&\vdots\\
0&&1&0\end{pmatrix},\quad
X_N=\begin{pmatrix} g_0(z_0)&\cdots &g_N(z_0)\\
\vdots&&\vdots\\
g_0(z_N)&\cdots&g_N(z_N)\end{pmatrix}.
\end{gather*}
It can be shown by induction on $N$ that every solution $X_N$ of \eqref{mateq} is symmetric: $X_N=X_N^\top .$ Indeed, for $N=1$ \eqref{mateq} reduces to
$$\begin{pmatrix} 1& 1\end{pmatrix}X_1\begin{pmatrix}-1\\ 1\end{pmatrix}=\begin{pmatrix} -1& 1\end{pmatrix}X_1\begin{pmatrix}1\\ 1\end{pmatrix},$$
and the result follows by straightforward computation. For $N>1$ one may assume that $X_{N-1}$ is symmetric and, without loss of generality, that $X_N$ is skew-symmetric: $X_N^\top =-X_N.$ Then $X_N$ is of the form
$$X_N=\begin{pmatrix}0_{N\times N}&v\\-v^\top &0\end{pmatrix},$$ where $v$ is a vector-column of dimension $N,$ and
 equation \eqref{mateq} reduces to a system of two equations:
$$\left\{\begin{aligned} v^\top j+j^\top v&=0,\\
E_{N-1}(I_{N-1}+Z_{N-1}-P_{N-1})v&=(z_N-z_{N-1})(I_{N-1}-Z_{N-1}-P_{N-1})v,
\end{aligned}\right.$$
where
$$j=\begin{pmatrix}0&0&\cdots&0&1\end{pmatrix}^\top .$$
Since the path from $z_0$ to $z_N$ is of minimal length, the diagonal matrix
$E_{N-1}+(z_N-z_{N-1})I_{N-1}$ is invertible, and the second equation can be re-written as
\begin{multline*}Z_{N-1}v\\=-(E_{N-1}+(z_N-z_{N-1})I_{N-1})^{-1}(E_{N-1}-(z_N-z_{N-1})I_{N-1})(I-P_{N-1})v.\end{multline*}
Since $v^\top j=j^\top v=0,$ $v=0$ and $X_N=X_N^\top ,$ which means
$$f(z)=g_0(z_N)=g_N(z_0)=Cg_N=CG(z_N)=CG(z)=C(I-zA)^{-\odot}f.$$
\end{proof}

\section{Hardy space and Schur class}\label{Sec-Sch}
\begin{definition} Let $\mu>1.$ The weighted Hardy space $\bH^2_\mu(\cN)$ is the RKHS of $\cN$-valued DA functions on $\Lambda$ with the reproducing kernel
$$K_\mu(z,w)=\sum_{n=0}^\infty\dfrac{z^{(n)}\overline{w^{(n)}}}{\mu^{2n}}I_{\cN}.$$
\end{definition}
The following two observations are straightforward; the proofs are omitted.
\begin{proposition} The weighted Hardy space $\bH^2_\mu(\cN)$ can be characterized as follows:
$$\bH^2_\mu(\cN)=\{f(z)=\sum_{n=0}^\infty z^{(n)}\hat{f}(n):\sum_{n=0}^\infty \mu^{2n}\|\hat{f}(n)\|_\cN^2<\infty\}.$$
\end{proposition}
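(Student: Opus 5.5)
The plan is the standard one for a diagonal kernel. Write $h_n(z)=z^{(n)}\mu^{-n}$. The kernel is then $K_\mu(z,w)=\sum_n h_n(z)\overline{h_n(w)}I_\cN$, a sum of rank-type kernels built from the linearly independent family $z^{(n)}$ of Proposition \ref{tayprop}.

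First I check that the series defining $K_\mu$ converges pointwise. By \eqref{cauchy}, for each $z$ and each $\epsilon>0$ we have $|z^{(n)}|\le (1+\epsilon)^n$ for large $n$. Choosing $1+\epsilon<\mu$ shows that $\sum_n |z^{(n)}|^2\mu^{-2n}<\infty$, so $K_\mu(z,w)$ is well defined by Cauchy--Schwarz. The same estimate shows that for any coefficients $\hat f(n)\in\cN$ with $\sum_n\mu^{2n}\|\hat f(n)\|^2<\infty$, the series $\sum_n z^{(n)}\hat f(n)$ converges absolutely at every vertex:
$$\sum_n |z^{(n)}|\|\hat f(n)\|\le \Big(\sum_n|z^{(n)}|^2\mu^{-2n}\Big)^{1/2}\Big(\sum_n\mu^{2n}\|\hat f(n)\|^2\Big)^{1/2}.$$
The resulting function is DA, because the Cauchy--Riemann relation \eqref{CR} on each face involves only finitely many vertices and passes to pointwise limits.

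Next, define $\cH$ to be the space on the right-hand side, with norm $\|f\|^2=\sum_n\mu^{2n}\|\hat f(n)\|^2$. For this norm to be well defined, the coefficients must be determined by $f$. I would show this by applying the argument of Proposition \ref{tayprop} to infinite sums. By Remark \ref{lcfwd}, $Z_-$ evaluated at a point is a finite linear combination of point values. Since the series converges pointwise absolutely, $Z_-$ can be applied termwise, and \eqref{bwdbas} together with $0^{(n)}=\delta_{n0}$ gives $\hat f(n)=(Z_-^nf)(0)$. This uniqueness step is the one point needing care. The bound $|0^{(n)}|$ is trivial, so the only real issue is justifying termwise application of $Z_-$ iterated $n$ times, and that follows from the finiteness in Remark \ref{lcfwd}.

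With uniqueness in hand, $\cH$ is isometrically isomorphic to a weighted $\ell^2(\cN)$, so it is a Hilbert space. The functions $K_\mu(\cdot,w)\eta$ belong to $\cH$, with coefficients $\overline{w^{(n)}}\mu^{-2n}\eta$, and these are square-summable with the weights by the first step. Finally I check the reproducing property:
$$\langle f,K_\mu(\cdot,w)\eta\rangle=\sum_n\mu^{2n}\langle\hat f(n),\overline{w^{(n)}}\mu^{-2n}\eta\rangle=\Big\langle\sum_n w^{(n)}\hat f(n),\eta\Big\rangle=\langle f(w),\eta\rangle.$$
Since a reproducing kernel Hilbert space is determined by its kernel, $\cH=\bH^2_\mu(\cN)$, which proves the claim.
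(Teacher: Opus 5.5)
Your proof is correct. The paper omits this proof as straightforward, and your argument is the standard verification it has in mind: pointwise convergence from \eqref{cauchy} with $\mu>1$, uniqueness of coefficients via $\hat f(n)=(Z_-^nf)(0)$ (applied termwise, which the finite-combination formula of Remark~\ref{lcfwd} justifies), and the reproducing identity for $K_\mu(\cdot,w)\eta$ followed by uniqueness of the RKHS determined by its kernel.
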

\begin{proposition}\label{isomshift} The forward and backward shifts $Z_+,Z_-$ are bounded operators on $\bH^2_\mu(\cN).$ Moreover,  
$Z_+^*=\mu^2Z_-,$  $\frac{1}{\mu}Z_+$ is an isometry, $\mu Z_-$ is a co-isometry.
\end{proposition}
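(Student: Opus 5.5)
Everything will be done in coefficient coordinates, using the characterization of $\bH^2_\mu(\cN)$ from the preceding proposition: $f=\sum_{n\ge0} z^{(n)}\hat f(n)$ with norm $\|f\|^2=\sum_n \mu^{2n}\|\hat f(n)\|^2_\cN$. The first task is to check that $Z_+$ and $Z_-$ act termwise on these series. For the forward shift, one uses \eqref{fwdbas}, $Z_+z^{(n)}=z^{(n+1)}$, and wants $Z_+f=\sum_n z^{(n+1)}\hat f(n)$. Since $Z_+$ is defined pointwise through finite sums and discrete integrals along finite paths, it is enough that the series converge pointwise at every vertex and that the partial sums converge pointwise. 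This holds because point evaluations are bounded in the RKHS, so norm convergence of the partial sums gives pointwise convergence at each vertex. The value $Z_+f(z)$ is a finite linear combination of values of $f$, by the formula $(Z_+f)(z)=\frac{f(0)-f(z)}{2}+\int_0^z f\delta z$ with the integral taken over a fixed path. By Remark \ref{lcfwd}, the same is true for $Z_-f(z)$. Both operators therefore commute with pointwise limits. Using \eqref{bwdbas}, this gives
$$Z_-f=\sum_{n\ge0} z^{(n)}\hat f(n+1).$$

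Next I will verify that these series define elements of $\bH^2_\mu(\cN)$ and compute their norms:
$$\|Z_+f\|^2=\sum_n\mu^{2n+2}\|\hat f(n)\|^2=\mu^2\|f\|^2,\qquad \|Z_-f\|^2=\sum_n \mu^{2n}\|\hat f(n+1)\|^2\le \mu^{-2}\|f\|^2 .$$
Both operators are therefore bounded, and $\frac1\mu Z_+$ is an isometry.

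For the adjoint, I will compute the inner product on the orthogonal basis $z^{(n)}\xi$, using $\|z^{(n)}\xi\|^2=\mu^{2n}\|\xi\|^2$:
$$\langle Z_+ z^{(n)}\xi, z^{(m)}\eta\rangle=\mu^{2m}\delta_{n+1,m}\langle\xi,\eta\rangle,$$
$$\langle z^{(n)}\xi,\mu^2Z_- z^{(m)}\eta\rangle=\mu^2\,\mu^{2n}\delta_{n,m-1}\langle \xi,\eta\rangle .$$
These agree, so $Z_+^*=\mu^2Z_-$ on a dense set, and by boundedness everywhere. Consequently $(\mu Z_-)^*=\frac1\mu Z_+$, which is an isometry, so $\mu Z_-$ is a co-isometry. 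One can also see this directly from $(\mu Z_-)(\mu Z_-)^*=Z_-Z_+=I$ by \eqref{structure}.

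The only step needing care is the justification that $Z_\pm$ commute with the infinite expansion, namely the pointwise-limit argument. The decisive inputs there are Remark \ref{lcfwd}, which makes $Z_-$ a finite linear combination of point values, and the boundedness of point evaluation. Everything else is bookkeeping with the weights $\mu^{2n}$.
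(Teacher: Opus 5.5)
Your proof is correct. The paper omits this proof as straightforward, and your argument is the intended verification: you show that $Z_\pm$ act termwise on $\sum_n z^{(n)}\hat f(n)$, using bounded point evaluations and the fact that $(Z_+f)(z)$ and, by Remark \ref{lcfwd}, $(Z_-f)(z)$ are fixed finite linear combinations of values of $f$, and then you do the weight bookkeeping and check the adjoint on the orthogonal family $z^{(n)}\xi$.
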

\begin{definition}
Let $\cH$ be a Hilbert space, and let $\left(\begin{smallmatrix} A& B\\C&D\end{smallmatrix}\right)$ be a co-isometry from $\left(\begin{smallmatrix}\cH\\ \cM\end{smallmatrix}\right)$ to $\left(\begin{smallmatrix}\cH\\ \cN\end{smallmatrix}\right).$ For $\mu>1,$ the DA function
\begin{equation}\label{defSch}
S_\mu(z)=D+C\left(I-z\dfrac{A}{\mu}\right)^{-\odot}\odot\left(z\dfrac{B}{\mu}\right)\end{equation}
is called the $\mu$-dilated $\bL(\cM,\cN)$-valued DA Schur function associated with the co-isometry  $\left(\begin{smallmatrix} A& B\\C&D\end{smallmatrix}\right).$\end{definition}

\begin{theorem}\label{Schkerthm} If  $\left(\begin{smallmatrix} A& B\\C&D\end{smallmatrix}\right)$ is a co-isometry from $\left(\begin{smallmatrix}\cH\\ \cM\end{smallmatrix}\right)$ to $\left(\begin{smallmatrix}\cH\\ \cN\end{smallmatrix}\right),$ then for every $\mu>1$ the kernel
\begin{equation}\label{kersch}
K^{S_\mu}(z,w)=\sum_{n=0}^\infty\dfrac{z^{(n)}\overline{w^{(n)}}I_{\cN}-(Z_+^nS_\mu)(z)(Z_+^nS_\mu)(w)^*}{\mu^{2n}},
\end{equation}
where $S_\mu$ is the associated $\mu$-dilated DA Schur function,
 is of the form
$$K^{S_\mu}(z,w)=F(z)F(w)^*,$$
with
$$F(z)=C\left(I-z\dfrac{A}{\mu}\right)^{-\odot}.$$
In particular, kernel \eqref{kersch} is positive in this case.

Conversely, if  $\mu>1$ and a $\bL(\cM,\cN)$-valued DA function $S_\mu(z)$ are such that  kernel \eqref{kersch} is positive, then there exist a Hilbert space $\cH$ and a co-isometry $\left(\begin{smallmatrix} A& B\\C&D\end{smallmatrix}\right)$ from $\left(\begin{smallmatrix}\cH\\ \cM\end{smallmatrix}\right)$ to $\left(\begin{smallmatrix}\cH\\ \cN\end{smallmatrix}\right),$ such that
 $S_\mu(z)$ is the  associated $\mu$-dilated DA Schur function.
\end{theorem}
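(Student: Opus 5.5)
The plan is to work with Taylor coefficients relative to the basis $z^{(n)}$ and to reduce everything to the classical coefficient identities. Set $\tilde A=A/\mu$, $\tilde B=B/\mu$. Since $\|\tilde A\|<1$ and $\bS(\Lambda)$ lies outside the open unit disk, $\sigma(\tilde A)\cap\bS(\Lambda)=\emptyset$. By Theorem \ref{resolve}, $(I-z\tilde A)^{-\odot}$ therefore exists, and I will identify it with the convergent series $\sum_{n\ge0}z^{(n)}\tilde A^n$. To do this, expand each factor in \eqref{resA} as a power series in $\tilde A$, exactly as for $e_t$ in \eqref{genfunexp}; the bound \eqref{cauchy} gives convergence. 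Alternatively, one checks directly that the series $G=\sum z^{(n)}\tilde A^n$ satisfies $G=I+Z_+G\,\tilde A$, using \eqref{fwdbas} and continuity of $Z_+$ pointwise via Remark \ref{lcfwd}-type finite formulas (for $Z_+$ the formula is a finite path sum). Consequently $F(z)=\sum_n z^{(n)}C\tilde A^n$ and
$$S_\mu(z)=D+\sum_{n\ge1}z^{(n)}C\tilde A^{n-1}\tilde B,$$
so $S_\mu$ has the classical Taylor coefficients $s_0=D$, $s_n=C\tilde A^{n-1}\tilde B$.

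\textbf{Direct part.} Since $z^{(m)}\odot z^{(n)}=z^{(m+n)}$, we have $Z_+^nS_\mu(z)=\sum_k z^{(n+k)}s_k$. Substituting into \eqref{kersch} and collecting the coefficient of $z^{(p)}\overline{w^{(q)}}$ (absolute convergence follows from \eqref{cauchy} and $\|\tilde A\|\le 1/\mu$) gives
$$\mu^{-2p}\delta_{pq}I-\sum_{n\le \min(p,q)}\mu^{-2n}s_{p-n}s_{q-n}^*.$$
The claim is that this equals $C\tilde A^p\tilde A^{*q}C^*$. This is the standard computation using $AA^*+BB^*=I$, $CA^*+DB^*=0$, $CC^*+DD^*=I$. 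The cleanest way to organize it is to note that the coefficient arrays are the same as those of the classical kernel $(I-s(\zeta)s(\omega)^*)/(1-\zeta\bar\omega)$ for $s(\zeta)=D+\zeta C(I-\zeta\tilde A)^{-1}\tilde B$, evaluated with weights $\mu^{-2}$. Equivalently, after rescaling, it is the classical identity $I-s(\zeta)s(\omega)^*=(1-\zeta\bar\omega\mu^{-2}\cdots)$. I will simply verify the coefficient identity by the telescoping $\tilde A\tilde A^*+\tilde B\tilde B^*=\mu^{-2}I$ together with the other two co-isometry relations. The resulting sum is $\sum_{p,q}z^{(p)}C\tilde A^p\tilde A^{*q}C^*\overline{w^{(q)}}=F(z)F(w)^*$.

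\textbf{Converse.} I will use the lurking-isometry argument. Write the positive kernel as $K^{S_\mu}=\Gamma(z)\Gamma(w)^*$ through its RKHS $\cH=\cH(K^{S_\mu})$. Then, reading the coefficient identity backwards, the inner product relation
$$\mu^{-2}\langle Z_+\text{-shifted data}\rangle+\langle\cdot\rangle=\langle\cdot\rangle$$
is equivalent to the positivity identity
$$K(z,w)-\mu^{-2}\,[\text{$Z_+$ applied in both variables}]K=\cdots .$$
More concretely, $K^{S_\mu}$ satisfies
$$K^{S_\mu}(z,w)-\mu^{-2}(Z_+\otimes\bar Z_+)K^{S_\mu}(z,w)=I-S_\mu(z)S_\mu(w)^*,$$
where the left side telescopes the series. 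From this identity, the map
$$\begin{pmatrix}\mu^{-1}(Z_+\text{-adjoint structure})K(\cdot,w)\eta\\ S_\mu(w)^*\eta\end{pmatrix}\mapsto\begin{pmatrix}K(\cdot,w)\eta\\ \eta\end{pmatrix}$$
is well-defined and isometric. Extend it to a co-isometry $\left(\begin{smallmatrix}A&B\\C&D\end{smallmatrix}\right)^*$, enlarging $\cH$ if necessary. Unwinding then gives $A\approx\mu Z_-$, $C$ = evaluation at $0$, and, via Theorem \ref{crt} (point evaluation formula), $S_\mu$ has coefficients $D$, $C\tilde A^{n-1}\tilde B$.

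\textbf{Main obstacle.} The main obstacle is making the two-variable telescoping identity rigorous in the DA setting, since $Z_+$ is not pointwise multiplication. I will handle it by passing entirely to coefficients: every function in sight is expanded in the $z^{(n)}$ basis, so $Z_+$ becomes the coefficient shift and the identity becomes exactly the classical one for the sequence $s_n$. This reduces the converse to the classical realization theorem for the Schur class with respect to the weighted sequence, then transported back via $z^{(n)}$. The point requiring care is that $S_\mu$ need not a priori be given by a convergent $z^{(n)}$ expansion. Here I would use positivity of \eqref{kersch} at $z=w$ together with Proposition \ref{pwb} to obtain $\sum\mu^{-2n}\|Z_+^nS_\mu(z)\|^2<\infty$. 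Extracting coefficients via $\hat s_n=Z_-^nS_\mu(0)$ and Theorem \ref{crt} would then identify $S_\mu$ with its series.
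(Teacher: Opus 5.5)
Your direct half is correct but argues differently from the paper. You expand $(I-z\tilde A)^{-\odot}=\sum_n z^{(n)}\tilde A^n$ and verify the coefficient identity $\mu^{-2p}\delta_{pq}I-\sum_{n\le\min(p,q)}\mu^{-2n}s_{p-n}s_{q-n}^*=C\tilde A^p\tilde A^{*q}C^*$. This identity does follow by induction, from $k_{p,q}=\mu^{-2}k_{p-1,q-1}-s_ps_q^*$ for $p,q\ge1$ together with the co-isometry relations. The paper never expands anything. It writes $(F,S_\mu)=(\mu^{-1}Z_+F,\,I)\left(\begin{smallmatrix}A&B\\C&D\end{smallmatrix}\right)$, applies $Z_+^n$, and uses the co-isometry to get a one-step telescoping identity for each $n$. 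It then disposes of the tail with Proposition \ref{pwb}.

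The converse, however, has a genuine gap. Your argument ultimately needs two things: that $S_\mu(z)=\sum_n z^{(n)}\hat s_n$ with $\hat s_n=(Z_-^nS_\mu)(0)$, and that the classical realization theorem applies to $(\hat s_n)$. Neither is established.

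\begin{itemize}
\item The estimate you propose, $\sum_n\mu^{-2n}\|Z_+^nS_\mu(z)\|^2<\infty$, holds for \emph{every} DA function by Proposition \ref{pwb}, with or without positivity. So it cannot give convergence of the $z^{(n)}$-expansion, which fails for general DA functions (e.g.\ $e_t$ with $t\notin\bS(\Lambda)$ and $|t|$ large).
\item Invoking Theorem \ref{crt} is circular here. It requires an RKHS containing the relevant functions on which $Z_-$ is bounded with $\sigma(Z_-)\cap\bS(\Lambda)=\emptyset$, and producing such a space is the real content of the converse. The paper gets it from the isometric relation $\mathcal{R}$ with dense domain $\{(K^{S_\mu}_w\alpha,\beta)\}$ in $\cH(K^{S_\mu})\oplus\cN$. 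Its adjoint is a co-isometry on $\cH(K^{S_\mu})$ itself with $A=\mu Z_-$, $B=\mu Z_-S_\mu$, $C$ the evaluation at $0$, and $D=S_\mu(0)$; only then is Theorem \ref{crt} used.
\item Your lurking-isometry sketch does not fill this in. Let $P_w:\cN\to\cH(K^{S_\mu})$ be defined by $P_w^*h=(Z_+h)(w)$; it is a finite combination of kernel functions, since $(Z_+h)(w)$ is a finite path sum. The identity $K^{S_\mu}(z,w)+S_\mu(z)S_\mu(w)^*=I+\mu^{-2}(Z_+\otimes\bar Z_+)K^{S_\mu}(z,w)$ makes $(\mu^{-1}P_w\eta,\eta)\mapsto(K^{S_\mu}_w\eta,S_\mu(w)^*\eta)$ isometric, not the pairing you wrote. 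Moreover, after enlarging the state space, $A$ is no longer $\mu Z_-$, so Theorem \ref{crt} does not apply to it.
\item Passing from positivity of \eqref{kersch} on $\bV(\Lambda)\times\bV(\Lambda)$ to positivity of the classical kernel built from $(\hat s_n)$ is also left unargued.
\end{itemize}

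Either of the following repairs closes the gap.

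(i) Finish the lurking isometry directly. Extend the isometry above to a larger space $\cH'$ and set $V^*=\left(\begin{smallmatrix}A&B\\C&D\end{smallmatrix}\right)$. Let $\Phi(w)\in\bL(\cH',\cN)$ be evaluation at $w$ after projecting onto $\cH(K^{S_\mu})$. Taking adjoints gives $\Phi=C+(Z_+\Phi)A/\mu$ and $S_\mu=D+(Z_+\Phi)B/\mu$. The difference $H=\Phi-C(I-zA/\mu)^{-\odot}$ vanishes at $0$ and satisfies $H(u)(2I+(1+v-u)A/\mu)=H(v)(2I+(1+u-v)A/\mu)$ on edges. The factors are invertible because $\|A/\mu\|<1$, so $H=0$, which yields \eqref{defSch} without Theorem \ref{crt}.

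(ii) For your coefficient route, use positivity as in the proof of the subsequent theorem (which does not depend on this one) to see that $M_{S_\mu}$ is a contraction on $\bH^2_\mu$. Then $S_\mu\zeta=M_{S_\mu}(z^{(0)}\zeta)\in\bH^2_\mu(\cN)$, so the expansion converges in norm and hence pointwise. Moreover, $M_{S_\mu}$ is unitarily equivalent to multiplication by $\sum_n\hat s_n\zeta^n$ on $H^2$ of the disk of radius $\mu$, where the classical realization theorem applies.
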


\begin{proof} The idea of the proof is similar to the $\epsilon$-method of Kre\u{\i}n
and Langer (see e.g. \cite{krein}), that was adapted in \cite{adrs} using the underlying
reproducing kernel Pontryagin spaces. 
First, assume that $$
\begin{pmatrix} A& B\\C&D\end{pmatrix}:\begin{pmatrix}\cH\\ \cM\end{pmatrix}\longrightarrow\begin{pmatrix}\cH\\ \cN\end{pmatrix}$$ is a co-isometry, that $\mu>1,$
and that $S_\mu(z)$ is given by \eqref{defSch}. 
Then 
\begin{gather*}
\dfrac{1}{\mu}\begin{pmatrix} Z_+F(z) & \mu I \end{pmatrix}\begin{pmatrix} B\\ D\end{pmatrix}=S_\mu(z),\\[1ex]
\dfrac{1}{\mu}\begin{pmatrix} Z_+F(z) & \mu I \end{pmatrix}\begin{pmatrix} A \\ C\end{pmatrix}=F(z).
\end{gather*}
 Therefore, for every $n\in\NN\cup\{0\},$
 \begin{multline*}\dfrac{ 
 z^{(n)} \overline{ w^{(n)} } I - (Z_+^nS_\mu)(z) (Z_+^nS_\mu)(w)^*
 } {\mu^{2n}}
 \\=\dfrac{
 (Z_+^nF)(z)(Z_+^nF)(w)^*
 }{\mu^{2n}}
 -\dfrac{
 (Z_+^{n+1}F)(z)(Z_+^{n+1}F)(w)^*}{\mu^{2n+2}},
 \end{multline*}
 and, in view of Proposition \ref{pwb},
 $$K^{S_\mu}(z,w)=F(z)F(w)^*.$$
 
 To prove the converse statement, assume that
 $\mu>1$ and a $\bL(\cM,\cN)$-valued DA function $S_\mu(z)$ are such that  kernel \eqref{kersch} is positive.
 Consider the associated RKHS $\cH=\cH(K^{S_\mu})$ and denote
 $$K^{S_\mu}_w(z)=K^{S_\mu}(z,w),\quad L^{S_\mu}_w(z)=(Z_-K^{S_\mu}_z)(w)^*.$$
 Note that, in view of Remark \ref{lcfwd}, for every $w\in\bV(\Lambda)$ and every $\zeta\in\cN,$ function $L^{S_\mu}_w(z)\zeta$ is an element of the RKHS $\cH$ and that
 \begin{equation}\label{secfor11} L^{S_\mu}_w(z)=-S_\mu(z)(Z_-S_\mu)(w)^*+\dfrac{1}{\mu^2}(Z_+K^{S_\mu}_w)(z).\end{equation}
 Next, define a linear relation $\mathcal{R}$ in
 $$\begin{pmatrix} \cH\\ \cN\end{pmatrix}\times\begin{pmatrix} \cH\\ \cM\end{pmatrix}$$ as follows:
 $$\mathcal{R}=\spa\left\{\left( \begin{pmatrix} K^{S_\mu}_w\alpha\\ \beta\end{pmatrix},
 \begin{pmatrix} \mu L^{S_\mu}_w\alpha+K^{S_\mu}_0\beta\\
\mu (Z_-S_\mu)(w)^*\alpha+S_\mu(0)^*\beta\end{pmatrix}\right)\right\},$$
 where $w$ ranges over $\bV(\Lambda),$ and $\alpha,\beta$ - over $\cN.$
One can check that relation $\mathcal{R}$ is isometric in a number of steps.
Firstly, for every $w\in\bV(\Lambda)$ and every $\alpha,\beta\in\cN,$ it holds that
\begin{multline*} 
 \langle L^{S_\mu}_w\alpha,K^{S_\mu}_0\beta\rangle_{\cH}=\langle L^{S_\mu}_w(0)\alpha,\beta\rangle_\cN=\langle  (Z_-K^{S_\mu}_0)(w)^*\alpha,\beta\rangle_\cN
 \\
 		=-\langle S_\mu(0)(Z_-S_\mu)(w)^*\alpha,\beta\rangle_\cN
 		=-\langle(Z_-S_\mu)(w)^*\alpha,S_\mu(0)^*\beta\rangle_\cN.
 \end{multline*}	
 Secondly,	 for every $\beta_1,\beta_2\in\cN$ one has
 \begin{multline*}		
 		\langle K^{S_\mu}_0\beta_1, K^{S_\mu}_0\beta_2\rangle_{\cH}=\langle K^{S\mu}_0(0)\beta_1,\beta_2\rangle_\cN\\
 		=\langle \beta_1,\beta_2\rangle_\cN-\langle S_\mu(0)^*\beta_1,S_\mu(0)^*\beta_2\rangle_\cM.
\end{multline*} 
Thirdly,
according to  Remark \ref{lcfwd}, for every $w_2\in\bV(\Lambda)$ there exist $N\in\mathbb{N}$, $c_1,\ldots, c_N\in\mathbb{C}$ and $ z_1,\ldots,z_N\in\bV(\Lambda)$ such that, for every DA function $f(z)$, it holds that $$Z_-f(w_2)=\sum_{j=1}^Nc_jf(z_j).$$ 
Accordingly,
$$L^{S_\mu}_{w_2}(z)=\sum_{n=1}^N\bar{c}_nK^{S_\mu}_{z_n}(z),$$
and, for every $w_1\in\bV(\Lambda$ and every $\alpha_1,\alpha_2\in\cN,$ one has
\begin{multline*}
 		\langle\mu L^{S_\mu}_{w_1}\alpha_1,\mu L^{S_\mu}_{w_2}\alpha_2\rangle_{\cH}
 		=\mu^2\sum_{n=1}^N \langle L^{S_\mu}_{w_1}\alpha_1,\bar{c_n}K^{S_\mu}_{z_n}\alpha_2\rangle_{\cH}\\
 		=\mu^2\sum_{n=1}^Nc_n\langle L^{S_\mu}_{w_1}(z_n)\alpha_1,\alpha_2\rangle_{\cN}
 		=\mu^2\langle (Z_-L^{S_\mu}_{w_1})(w_2)\alpha_1,\alpha_2\rangle_{\cN}.
\end{multline*}
Finally, in view of \eqref{secfor11}, this last expression can be re-written as
\begin{multline*}\langle K^{S_\mu}_{w_1}(w_2)\alpha_1,\alpha_2\rangle_{\cN}-\mu^2
		\langle (Z_-S_\mu)(w_2)(Z_-S_\mu)(w_1)^*\alpha_1,\alpha_2\rangle_{\cN}
		\\
 		=\langle K^{S_\mu}_{w_1}\alpha_1,K^{S_\mu}_{w_2}\alpha_2\rangle_{\cH}-\langle\mu(Z_-S_\mu)(w_1)^*\alpha_1,\mu(Z_-S_\mu)(w_2)^*\alpha_2\rangle_\cM.
\end{multline*}
 Since the domain of $\mathcal{R}$ is dense in $\left(\begin{smallmatrix} \cH\\ \cN\end{smallmatrix}\right).$ $\mathcal{R}$ extends as the graph of an isometry from $\left(\begin{smallmatrix} \cH\\ \cN\end{smallmatrix}\right)$ to $\left(\begin{smallmatrix} \cH\\ \cM\end{smallmatrix}\right).$The adjoint is a coisometry
 $$\begin{pmatrix} A& B\\C&D\end{pmatrix}:\begin{pmatrix}\cH\\ \cM\end{pmatrix}\longrightarrow\begin{pmatrix}\cH\\ \cN\end{pmatrix},$$
 where
 \begin{gather*}
 Af(z)=\mu (Z_-f)(z),\quad B\zeta=\mu (Z_-S_\mu)(z)\zeta,\\
 Cf(z)=f(0),\quad D\zeta=S_\mu(0)\zeta.
 \end{gather*}
 In particular, $Z_-=\frac{A}{\mu}\in\bL(\cH)$ is a strict contraction, so that $\sigma(Z_-)\cap\bS(\lambda)=\emptyset.$ By Theorem \ref{crt},  it holds for every $f\in\cH$ that
 $$C\left(I-z\dfrac{A}{\mu}\right)^{-\odot}f=f(z).$$
 Therefore, for $\zeta\in\cM$ one has
 \begin{gather*}C\left(I-z\dfrac{A}{\mu}\right)^{-\odot}B\zeta=\mu(Z_-S_\mu)(z)\zeta,\\
 Z_+C\left(I-z\dfrac{A}{\mu}\right)^{-\odot}B\zeta=\mu(S_\mu(z)-S_\mu(0))\zeta=\mu S_\mu(z)-\mu D\zeta,\\
 S_\mu(z)\zeta=\left(D+\dfrac{1}{\mu}Z_+C\left(I-z\dfrac{A}{\mu}\right)^{-\odot}B\right)\zeta,
 \end{gather*}
 which is the same as \eqref{defSch}.
\end{proof}

Positivity of the kernel \eqref{kersch} allows to extend the convolution product to the Schur class.
\begin{theorem} Let $S_\mu(z)$ be a $\bL(\cM,\cN)$-valued DA function, and let $\mu>1.$ The associated kernel \eqref{kersch} is positive if, and only if
the operator $M_{S_\mu}$ defined for 
$$f(z)=\sum_{n=0}^\infty z^{(n)}\hat{f}(n)\in\bH^2_\mu(\cM)$$
by
$$M_{S_\mu}f(z)=S_\mu\odot f(z)=\sum_{n=0}^\infty  Z_+^nS_\mu(z)\hat{f}(n)$$
is a contraction in  $\bL(\bH^2_\mu(\cM),\bH^2_\mu(\cN)).$
\end{theorem}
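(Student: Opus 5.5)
The plan is the standard one: express the positivity of \eqref{kersch} as the statement $I-M_{S_\mu}M_{S_\mu}^*\ge 0$, tested on reproducing kernels of $\bH^2_\mu(\cN)$. Write $K_w(z)=K_\mu(z,w)=\sum_n z^{(n)}\overline{w^{(n)}}\mu^{-2n}$. The finite linear combinations $\sum_j K_{w_j}\alpha_j$, with $\alpha_j\in\cN$, are dense in $\bH^2_\mu(\cN)$.

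\textbf{Step 1: the adjoint on kernels.} For $f=\sum_n z^{(n)}\hat f(n)\in\bH^2_\mu(\cM)$ such that $M_{S_\mu}f$ is well defined, we have
$$\langle M_{S_\mu}f,K_w\alpha\rangle=\langle (M_{S_\mu}f)(w),\alpha\rangle=\sum_n\langle (Z_+^nS_\mu)(w)\hat f(n),\alpha\rangle .$$
Since $\langle f,g\rangle=\sum_n\mu^{2n}\langle\hat f(n),\hat g(n)\rangle$, this identifies the Taylor coefficients of the adjoint:
$$(M_{S_\mu}^*K_w\alpha)\hat{\ }(n)=\mu^{-2n}(Z_+^nS_\mu)(w)^*\alpha,\qquad M_{S_\mu}^*K_w\alpha(z)=\sum_n z^{(n)}\mu^{-2n}(Z_+^nS_\mu)(w)^*\alpha.$$
Consequently
$$\langle M_{S_\mu}^*K_w\alpha,M_{S_\mu}^*K_v\beta\rangle=\sum_n\mu^{-2n}\langle (Z_+^nS_\mu)(v)(Z_+^nS_\mu)(w)^*\alpha,\beta\rangle,$$
and therefore
$$\langle K_w\alpha,K_v\beta\rangle-\langle M_{S_\mu}^*K_w\alpha,M_{S_\mu}^*K_v\beta\rangle=\langle K^{S_\mu}(v,w)\alpha,\beta\rangle.$$

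\textbf{Step 2: contraction implies positivity.} If $M_{S_\mu}$ is a contraction, it is bounded, and the computations in Step 1 are justified. Then $\sum_{i,j}\langle K^{S_\mu}(w_i,w_j)\alpha_j,\alpha_i\rangle=\|g\|^2-\|M_{S_\mu}^*g\|^2\ge0$ for $g=\sum_j K_{w_j}\alpha_j$, so the kernel \eqref{kersch} is positive.

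\textbf{Step 3: positivity implies contraction.} Conversely, assume \eqref{kersch} is positive. Taking $w=z$ shows that for each $w$ the series $\sum_n\mu^{-2n}(Z_+^nS_\mu)(w)(Z_+^nS_\mu)(w)^*$ converges in norm, dominated by $K_\mu(w,w)I$. So the $\cM$-valued function $h_{w,\alpha}:=\sum_n z^{(n)}\mu^{-2n}(Z_+^nS_\mu)(w)^*\alpha$ lies in $\bH^2_\mu(\cM)$, with $\|h_{w,\alpha}\|^2\le\|K_w\alpha\|^2$. Define $T$ on the span of kernels by $T(K_w\alpha)=h_{w,\alpha}$. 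Positivity of \eqref{kersch} gives $\|\sum_j h_{w_j,\alpha_j}\|^2\le\|\sum_jK_{w_j}\alpha_j\|^2$. This shows at once that $T$ is well defined and extends by density to a contraction $T:\bH^2_\mu(\cN)\to\bH^2_\mu(\cM)$. It remains to identify $T^*$ with $M_{S_\mu}$: for $f\in\bH^2_\mu(\cM)$,
$$(T^*f)(w)^*\text{-pairing: }\langle (T^*f)(w),\alpha\rangle=\langle f,h_{w,\alpha}\rangle=\sum_n\langle (Z_+^nS_\mu)(w)\hat f(n),\alpha\rangle .$$
Hence the series defining $M_{S_\mu}f(w)$ converges (weakly, and in fact absolutely by Cauchy–Schwarz, since $\sum\mu^{2n}\|\hat f(n)\|^2<\infty$ and $\sum\mu^{-2n}\|(Z_+^nS_\mu)(w)^*\alpha\|^2<\infty$) and equals $(T^*f)(w)$. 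Thus $M_{S_\mu}=T^*$ is a contraction.

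The main obstacle is the well-definedness and convergence bookkeeping: making sense of $\sum_n Z_+^nS_\mu(z)\hat f(n)$ pointwise for arbitrary $f$, and checking that $T^*f$ is given by that series. The Cauchy–Schwarz estimate above, driven by the diagonal bound coming from positivity of \eqref{kersch}, should handle it. For the direction in Step 2, pointwise convergence follows from the assumed boundedness of $M_{S_\mu}$ together with continuity of point evaluations.
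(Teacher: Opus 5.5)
Your proposal is correct and follows essentially the paper's route. You define the map $K_\mu(\cdot,w)\alpha\mapsto\sum_n z^{(n)}\mu^{-2n}(Z_+^nS_\mu)(w)^*\alpha$ on kernel functions, use positivity of \eqref{kersch} to extend it to a contraction whose adjoint is $M_{S_\mu}$, and conversely read off $K^{S_\mu}(\cdot,w)\alpha=(I-M_{S_\mu}M_{S_\mu}^*)K_\mu(\cdot,w)\alpha$. The one minor difference is that you get $h_{w,\alpha}\in\bH^2_\mu(\cM)$ from the diagonal bound of the positive kernel, whereas the paper cites the growth estimate of Proposition \ref{pwb}; either suffices, since only $\sum_n\mu^{-2n}\|(Z_+^nS_\mu)(w)^*\alpha\|^2<\infty$ is needed.
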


\begin{proof} Assume first that kernel \eqref{kersch} is positive.
Let $w\in\bV(\Lambda),$ let $\zeta\in\cN,$ and for $f(z)=K_\mu(z,w)\zeta\in\bH^2_\mu(\cN)$ define operator $Cf(z)$ by
$$Cf(z)=\sum_{n=0}^\infty z^{(n)}\dfrac{Z_+^nS_\mu(w)^*\zeta}{\mu^{2n}}.$$
In view of Proposition \ref{pwb},   $Cf(z)\in\bH^2_\mu(\cM).$ Moreover, the positivity of kernel \eqref{kersch} implies that $C$ extends as a linear contraction first on the dense linear subset
$$\spa\{K_\mu(\cdot,w)\zeta:w\in\bV(\Lambda),\zeta\in\cN\},$$
of $\bH^2_\mu(\cN)$ and then -- as a contraction in $\bL(\bH^2_\mu(\cN),\bH^2_\mu(\cM)).$ The adjoint of $C$ is the multiplication operator $M_{S_\mu}.$

Conversely, assume that $M_{S_\mu}$ is contractive. Then, for every $\zeta\in\cN$ and $w\in\bV(\Lambda),$
$$K^{S_\mu}(\cdot,w)\zeta=(I-M_{S_\mu}M_{S_\mu}^*)K_\mu(\cdot,w)\zeta,$$
and hence $K^{S_\mu}(z,w)$ is a positive kernel.
\end{proof}

\section{Blaschke factors}\label{Sec-Bla}
\begin{definition} Let $a\in\CC,\mu\in\RR$ be such that $|a|\leq 1$ and $\mu>1.$  The  $\mu$-dilated Blaschke factor $B^a_\mu(z)$ is a $\mu$-dilated DA Schur function associated with the $2\times 2$ unitary matrix
$$\begin{pmatrix} A&B\\C&D\end{pmatrix}=\begin{pmatrix}\bar{a}/\mu &\sqrt{1-|a|^2/\mu^2}\\
\sqrt{1-|a|^2/\mu^2}&-a/\mu\end{pmatrix}.$$
\end{definition}
\begin{proposition} The multiplication operator $M_{B_\mu^a}$ is an isometry on $\bH^2_\mu(\CC).$
\end{proposition}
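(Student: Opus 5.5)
The plan is to pass to the adjoint and use the realization from Theorem \ref{Schkerthm}. By the preceding theorem, $M_{B^a_\mu}$ is the adjoint of the contraction $C$ determined by
$$M_{B^a_\mu}^*\big(K_\mu(\cdot,w)\zeta\big)=\sum_{n=0}^\infty z^{(n)}\dfrac{\overline{(Z_+^nB^a_\mu)(w)}}{\mu^{2n}}\,\zeta .$$
So $M_{B^a_\mu}$ is an isometry if and only if $M_{B^a_\mu}M_{B^a_\mu}^*$ is a projection, and this holds if and only if $K^{B^a_\mu}(z,w)$ is the reproducing kernel of $\ker M^*_{B^a_\mu}$. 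It is more direct, however, to verify $\|M_{B^a_\mu}f\|=\|f\|$ on the orthogonal basis $z^{(n)}$ and then to check the cross terms.

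First I would compute $B^a_\mu$ explicitly. Take $A=\bar a/\mu$, and note that $\bar a/\mu^2$ lies in the disk, hence outside $\bS(\Lambda)$. Then $F(z)=C(1-z\bar a/\mu^2)^{-\odot}=\sqrt{1-|a|^2/\mu^2}\;e_{\bar a/\mu^2}(z)$ by Theorem \ref{genfunthm}, and
$$B^a_\mu=-\frac{a}{\mu}+\frac{1-|a|^2/\mu^2}{\mu}\,Z_+e_{\bar a/\mu^2}.$$
Since $Z_+e_t=(e_t-1)/t$, the Taylor coefficients are $\hat B(0)=-a/\mu$ and, for $n\ge1$, $\hat B(n)=\mu^{-1}(1-|a|^2/\mu^2)(\bar a/\mu^2)^{n-1}$. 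Because $M_{B}z^{(k)}=Z_+^kB=\sum_n \hat B(n)z^{(n+k)}$, the operator $M_B$ is convolution with the sequence $\hat B$, acting on the sequence space $\ell^2$ weighted by $\mu^{2n}$.

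Next I would rescale. Set $g(n)=\mu^n\hat f(n)$. Then $\bH^2_\mu$ becomes unweighted $\ell^2$, and $M_B$ becomes convolution with $b(n)=\mu^n\hat B(n)$. This gives $b(0)=-a/\mu$ and $b(n)=(1-|\alpha|^2)\bar\alpha^{\,n-1}$ for $n\ge1$, where $\alpha=a/\mu$. These are exactly the Taylor coefficients of the classical Blaschke factor
$$b(\lambda)=\frac{\lambda-\alpha}{1-\bar\alpha\lambda},\qquad |\alpha|<1.$$
Thus $M_B$ is unitarily equivalent to multiplication by an inner function on the classical $H^2$, which is an isometry. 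This finishes the proof.

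The step needing the most care is the identification of $\odot$-multiplication with coefficient convolution. Concretely, one must justify $Z_+^k\sum_n\hat B(n)z^{(n)}=\sum_n\hat B(n)z^{(n+k)}$ pointwise, interchanging $Z_+$ with an infinite series. I would handle this with the estimate \eqref{cauchy} and Proposition \ref{pwb}. Since the coefficients decay like $(|a|/\mu^2)^n$, the series $\sum_n\hat B(n)z^{(n)}$ converges pointwise. At each vertex $Z_+$ is a finite linear combination of values (by the edge relations \eqref{Deq} along a path), so it commutes with pointwise convergent sums. Density of DA polynomials in $\bH^2_\mu$ then extends the isometry to the whole space.
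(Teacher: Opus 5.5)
Your proof is correct, but it verifies the isometry by a different route than the paper.

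\textbf{The paper's route.} The paper never computes Taylor coefficients. Using $Z_+^*=\mu^2Z_-$, it writes $M_{B^a_\mu}=\frac{1}{\mu}\bigl(-aI+(1-\frac{|a|^2}{\mu^2})Z_+(I-\frac{\bar a}{\mu^2}Z_+)^{-1}\bigr)$, together with the corresponding formula for the adjoint. It then checks $M_{B^a_\mu}^*M_{B^a_\mu}=I$ directly from a resolvent identity whose only input is $Z_-Z_+=I$.

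\textbf{Common ground.} With $S=\frac{1}{\mu}Z_+$ and $\alpha=a/\mu$, that formula reads $M_{B^a_\mu}=-\alpha I+(1-|\alpha|^2)S(I-\bar\alpha S)^{-1}$. This is the classical Blaschke factor evaluated at $S$, so both proofs rest on the same identification. They differ only in how the isometry is then checked.

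\textbf{What each route buys.} The paper's algebraic check uses only that $S$ is an isometry. It would work verbatim for any isometry in place of $S$, and it needs no explicit coefficients. You instead use the orthonormal basis $\{z^{(n)}/\mu^n\}$ to realize $S$ as the unilateral shift on $\ell^2$, and you import the classical fact that Blaschke factors are inner.

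Your route costs the convergence bookkeeping you describe. You handle it correctly via \eqref{cauchy}, Proposition \ref{pwb}, and the observation that $Z_+$ at a vertex is a finite linear combination of values; the same estimate also shows the pointwise series for $M_{B^a_\mu}f$ agrees with the extension by continuity from polynomials. The paper leaves this step implicit when it writes \eqref{mb}.

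In return, your route gives more. The unitary equivalence with multiplication by $(\lambda-\alpha)/(1-\bar\alpha\lambda)$ on $H^2$ immediately identifies $\ker M_{B^a_\mu}^*$ with the image of the Szeg\H{o} kernel $1/(1-\bar\alpha\lambda)$. That image is $\sum_n(\bar a/\mu^2)^n z^{(n)}=e_{\bar a/\mu^2}$, so Proposition \ref{mbe} comes essentially for free.

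\textbf{Two minor points.} First, your opening remark is inaccurate: $MM^*$ being a projection characterizes partial isometries, not isometries, since you would also need $\ker M=\{0\}$. You abandon that route at once, so this does not affect the proof. Second, $Z_+e_t=(e_t-1)/t$ requires $t\neq 0$. The case $a=0$ should be read with $0^0=1$, where $B^0_\mu=z/\mu$ directly.
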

\begin{proof}
Note first that, by Proposition \ref{isomshift}, one has
\begin{gather}\label{mb}
M_{B_\mu^a}=\dfrac{1}{\mu}\left(-aI+\left(1-\dfrac{|a|^2}{\mu^2}\right)Z_+\left(I-\dfrac{\bar{a}}{\mu^2}Z_+\right)^{-1}\right),\\
\label{mb*}
M_{B_\mu^a}^*=\dfrac{1}{\mu}\left(-\bar{a}I+\mu^2\left(1-\dfrac{|a|^2}{\mu^2}\right)Z_-\left(I-aZ_-\right)^{-1}\right).
\end{gather}
The fact that $M_{B_\mu^a}^*M_{B_\mu^a}=I$ follows immediately from the identity
\begin{multline*}\left(1-\dfrac{|a|^2}{\mu^2}\right)\left(I-aZ_-\right)^{-1}\left(I-\dfrac{\bar{a}}{\mu^2}Z_+\right)^{-1}\\=I+aZ_-\left(I-aZ_-\right)^{-1}+\dfrac{\bar{a}}{\mu^2}Z_+\left(I-\dfrac{\bar{a}}{\mu^2}Z_+\right)^{-1}.\end{multline*}
\end{proof}
\begin{proposition} \label{mbe}The isometry $M_{B_\mu^a}$ satisfies the following relation:
$$I-M_{B_\mu^a}M_{B_\mu^a}^*=\left(1-\dfrac{|a|^2}{\mu^2}\right)\left(I-\dfrac{\bar{a}}{\mu^2}Z_+\right)^{-1}P_0\left(I-aZ_-\right)^{-1},$$
where $P_0$ is the orthogonal projection in $\bH^2_\mu(\CC)$ onto the subspace of constant functions. In particular,
$$\ker(M_{B_\mu^a}^*)=\ran(I-M_{B_\mu^a}M_{B_\mu^a}^*)=\spa\{e_{\bar{a}/\mu^2}(\cdot)\}.$$
\end{proposition}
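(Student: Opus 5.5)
Write $c=1-|a|^2/\mu^2$, $R_+=(I-\frac{\bar a}{\mu^2}Z_+)^{-1}$ and $R_-=(I-aZ_-)^{-1}$; both inverses exist as bounded operators on $\bH^2_\mu(\CC)$ by Proposition \ref{isomshift}. The plan is to compute $I-M_{B_\mu^a}M_{B_\mu^a}^*$ directly from \eqref{mb} and \eqref{mb*}, using only $Z_+^*=\mu^2Z_-$ and the structural identities \eqref{structure}. In $\bH^2_\mu(\CC)$ these say $Z_-Z_+=I$ and $Z_+Z_-=I-P_0$. The second holds because $f(0)$ is the coefficient $\hat f(0)$, so $f\mapsto f(0)$ is exactly $P_0$, as $0^{(n)}=0$ for $n\ge1$.

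First I rewrite the factors. Since $Z_-(I-aZ_-)^{-1}=(I-aZ_-)^{-1}Z_-$ and similarly for $Z_+$, we get
\[
M_{B^a_\mu}=\tfrac1\mu\bigl(-aI+cZ_+R_+\bigr),\qquad M_{B^a_\mu}^*=\tfrac1\mu\bigl(-\bar aI+\mu^2cR_-Z_-\bigr).
\]
Expanding the product gives four terms. The key one is the cross term $Z_+R_+R_-Z_-$. I intend to handle it by sandwiching: I conjugate the target identity by $(I-\frac{\bar a}{\mu^2}Z_+)$ on the left and $(I-aZ_-)$ on the right. It then suffices to check
\[
(I-\tfrac{\bar a}{\mu^2}Z_+)\bigl(I-M_{B^a_\mu}M_{B^a_\mu}^*\bigr)(I-aZ_-)=cP_0 .
\]
The left side becomes a polynomial in $Z_+$ and $Z_-$ with the $R_\pm$ cancelled. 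For example,
\[
(I-\tfrac{\bar a}{\mu^2}Z_+)M_{B^a_\mu}=\tfrac1\mu\bigl(-a+\tfrac{|a|^2}{\mu^2}Z_+ + cZ_+\bigr)=\tfrac1\mu(Z_+-a),
\]
and similarly
\[
M_{B^a_\mu}^*(I-aZ_-)=\tfrac1\mu\bigl(-\bar a+(\mu^2c+|a|^2)Z_-\bigr)=\tfrac1\mu(\mu^2Z_--\bar a).
\]
The product term is therefore $\frac1{\mu^2}(Z_+-a)(\mu^2Z_--\bar a)$. Expanding and using $Z_+Z_-=I-P_0$, the left side equals
\[
(I-\tfrac{\bar a}{\mu^2}Z_+)(I-aZ_-)-\tfrac1{\mu^2}\bigl(\mu^2(I-P_0)-\bar aZ_+-a\mu^2Z_-+|a|^2\bigr).
\]
All $Z_\pm$ terms cancel, and what remains is $I-(I-P_0)-\frac{|a|^2}{\mu^2}P_0$. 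The $\frac{|a|^2}{\mu^2}P_0$ comes from $\frac{\bar a a}{\mu^2}Z_+Z_-$ minus the constant $\frac{|a|^2}{\mu^2}$. This gives $cP_0$, as required. Multiplying back by $R_+$ and $R_-$ yields the stated formula.

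For the second statement, $M_{B^a_\mu}$ is an isometry (previous proposition), so $I-M_{B^a_\mu}M_{B^a_\mu}^*$ is the orthogonal projection onto $\ker M_{B^a_\mu}^*$. Hence that kernel equals the range, which is $R_+\ran P_0=\spa\{R_+1\}$. Note $c>0$ since $|a|\le1<\mu$. It remains to identify $R_+1$. Put $t=\bar a/\mu^2$. Because the series converges in $\bH^2_\mu$ ($|t|\mu<1$), we have $R_+1=\sum t^nZ_+^n1=\sum t^nz^{(n)}=e_t$ by \eqref{fwdbas} and \eqref{genfunexp}. Alternatively, $g=e_t$ satisfies $g-tZ_+g=1$ by \eqref{eigenfwd}, and $e_t\in\bH^2_\mu$ since $\sum\mu^{2n}|t|^{2n}<\infty$.

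The main obstacle is bookkeeping, namely correctly treating $Z_+Z_-=I-P_0$ as an operator identity on $\bH^2_\mu(\CC)$ rather than only on polynomials. I would verify it on the basis $z^{(n)}$ via \eqref{bwdbas} and extend by boundedness.
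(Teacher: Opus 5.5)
Your proof is correct and is essentially the paper's argument: the paper also works directly from \eqref{mb}, \eqref{mb*} and $Z_+Z_-=I-P_0$, packaging the algebra as a resolvent identity for $\left(1-\frac{|a|^2}{\mu^2}\right)(I-\frac{\bar a}{\mu^2}Z_+)^{-1}(I-aZ_-)^{-1}$. Multiplying that identity through by $(I-\frac{\bar a}{\mu^2}Z_+)$ and $(I-aZ_-)$ gives exactly your polynomial identity, and your identification of the range as $\spa\{e_{\bar a/\mu^2}\}$ (via the Neumann series or \eqref{eigenfwd}) fills in the ``in particular'' step that the paper leaves implicit.
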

\begin{proof}
The result  follows from \eqref{mb}, \eqref{mb*}, the fact that
$$Z_+Z_-=I-P_0,$$
and the identity
\begin{multline*}
\left(1-\dfrac{|a|^2}{\mu^2}\right)\left(I-\dfrac{\bar{a}}{\mu^2}Z_+\right)^{-1}\left(I-aZ_-\right)^{-1}\\=I+aZ_-\left(I-aZ_-\right)^{-1}+\dfrac{\bar{a}}{\mu^2}Z_+\left(I-\dfrac{\bar{a}}{\mu^2}Z_+\right)^{-1}
-\\ -\dfrac{|a|^2}{\mu^2}\left(I-\dfrac{\bar{a}}{\mu^2}Z_+\right)^{-1}P_0\left(I-aZ_-\right)^{-1}.
\end{multline*}
\end{proof}

 Let  $\gamma=(z_0=0,\dots,z_N)$ be a path in $\Lambda.$ We assume that $\gamma$ has minimal possible length -- that is, the graph distance from $z_0$ to $z_N$ equals precisely $N$ -- so that the family of DA functions
$$\{K_\mu(\cdot,z_n):n=0,\dots N\}$$
is linearly independent. For $n=0,\dots,N$ denote by $\cM_n$  the finite-dimensional subspace of $\bH^2_\mu(\CC),$ spanned by $\{K_\mu(\cdot,z_k):k=0,\dots,n\}.$

\begin{proposition}\label{mninv}
The subspace $\cM_n$ is invariant under both $Z_-$ and $M_{B_\mu^a}^*.$
\end{proposition}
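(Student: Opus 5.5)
We need to show $Z_-\cM_n\subset\cM_n$ and $M_{B_\mu^a}^*\cM_n\subset\cM_n$. The plan is to reduce the second inclusion to the first and to handle the first by induction on $n$, using the edge relation \eqref{Deq} along the path $\gamma$. The reduction goes through formula \eqref{mb*}. $\cM_n$ is finite-dimensional, so once it is $Z_-$-invariant, it is also invariant under $(I-aZ_-)^{-1}$, provided $I-aZ_-$ is invertible on $\bH^2_\mu(\CC)$. By Proposition \ref{isomshift}, $\|Z_-\|=1/\mu<1$ and $|a|\le1$, so $I-aZ_-$ is invertible. Its restriction to the invariant finite-dimensional subspace $\cM_n$ is injective, hence bijective onto $\cM_n$. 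Therefore $(I-aZ_-)^{-1}\cM_n=\cM_n$, and \eqref{mb*} gives $M_{B_\mu^a}^*\cM_n\subset\cM_n$.

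For $Z_-$-invariance we use adjoints. By Proposition \ref{isomshift}, $Z_-=\mu^{-2}Z_+^*$. Hence for $f\in\bH^2_\mu(\CC)$ we have $\langle f,Z_-K_\mu(\cdot,w)\rangle=\mu^{-2}(Z_+f)(w)$. Consequently $Z_-K_\mu(\cdot,w)=\mu^{-2}\,k_w$, where $k_w$ is the element representing the functional $f\mapsto (Z_+f)(w)$. Thus it suffices to show that, for each $k\le n$, the functional $f\mapsto (Z_+f)(z_k)$ is a linear combination of the evaluations $f\mapsto f(z_j)$ with $j\le k$, valid for every DA function $f$.

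This is proved by induction along $\gamma$. For $k=0$, $(Z_+f)(z_0)=(Z_+f)(0)=0$, so the claim holds trivially. For the step from $k-1$ to $k$, Definition \ref{fwd} applied on the edge $(z_{k-1},z_k)$ gives
$$(Z_+f)(z_k)-(Z_+f)(z_{k-1})=\frac{f(z_{k-1})-f(z_k)}{2}+\frac{f(z_{k-1})+f(z_k)}{2}(z_k-z_{k-1}).$$
This expresses $(Z_+f)(z_k)$ as $(Z_+f)(z_{k-1})$ plus a combination of $f(z_{k-1})$ and $f(z_k)$. By the induction hypothesis, $(Z_+f)(z_k)$ is then a combination of $f(z_0),\dots,f(z_k)$. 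Taking conjugate coefficients, $k_{z_k}\in\spa\{K_\mu(\cdot,z_j):j\le k\}\subset\cM_n$, so $Z_-K_\mu(\cdot,z_k)\in\cM_n$ for all $k\le n$.

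I expect the main point to be identifying the adjoint action, i.e. getting $Z_-K_\mu(\cdot,w)$ from $Z_+^*=\mu^2Z_-$, with the conjugation conventions handled correctly. Once that is set up, the telescoping along the path is immediate. Minimality of the path is not needed for invariance, only for the linear independence used later.
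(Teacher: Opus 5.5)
Your proof is correct and is essentially the paper's argument in dual form. The paper shows that $\cM_n^\perp=\{f\in\bH^2_\mu(\CC): f(z_0)=\dots=f(z_n)=0\}$ is $Z_+$-invariant, since $(Z_+f)(z_k)$ is a combination of $f(z_0),\dots,f(z_k)$ via the discrete integral along the path, and then invokes $Z_+^*=\mu^2Z_-$. You use the same two facts to compute $Z_-K_\mu(\cdot,z_k)$ directly, and you also spell out why $(I-aZ_-)^{-1}$ preserves $\cM_n$, a step the paper leaves implicit when it says invariance under $M_{B_\mu^a}^*$ follows from \eqref{mb*}.
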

\begin{proof}
$Z_-$-invariance of $\cM_n$ follows from Proposition \ref{isomshift} and the observation that the orthogonal complement of $\cM_n,$ 
$$\cM_n^\perp=\{f\in\bH_\mu^2(\CC):f(z_0)=f(z_1)=\dots=f(z_n)=0\},$$
is $Z_+$-invariant.  $M_{B_\mu^a}^*$-invariance then follows from \eqref{mb*}.
\end{proof}

In the following proposition a particular value of $a$ is chosen.

\begin{proposition} \label{intprop} Let $1\leq n\leq N,$ and let
$$a_n=\dfrac{z_n-z_{n-1}-1}{2}.$$ The following relations hold:
\begin{gather*}\left(Z_- -\dfrac{\bar{a}_n}{\mu^2} I\right)K(\cdot,z_n)\in \cM_{n-1},\\
\cM_n^\perp\subset\ker(I-M_{B_\mu^{a_n}}M_{B_\mu^{a_n}}^*),\\
M_{B_\mu^{a_n}}^*\cM_n=\cM_{n-1},\\
M_{B_\mu^{a_n}}\cM_{n-1}\subset\cM_n,\quad M_{B_\mu^{a_n}}^*\cM_n^\perp\subset\cM_{n-1}^\perp,\\
M_{B_\mu^{a_n}}\cM_{n-1}^\perp=\cM_n^\perp.
\end{gather*}
\end{proposition}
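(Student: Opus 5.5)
The plan is to derive the first relation directly from the reproducing property and the adjoint relation $Z_+^*=\mu^2Z_-$ of Proposition \ref{isomshift}. Everything else should then follow from Proposition \ref{mbe} and Proposition \ref{mninv}, using the fact that $Z_-$ acts on the one-dimensional quotient $\cM_n/\cM_{n-1}$ as a scalar, together with a dimension count.

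\textbf{Step 1 (the first relation).} Since $\cM_{n-1}$ is finite-dimensional, it suffices to show that $(Z_- -\frac{\bar a_n}{\mu^2}I)K(\cdot,z_n)$ is orthogonal to every $f\in\cM_{n-1}^\perp$, i.e.\ to every $f\in\bH^2_\mu(\CC)$ with $f(z_0)=\dots=f(z_{n-1})=0$. By Proposition \ref{isomshift},
$$\langle f,Z_-K(\cdot,z_n)\rangle=\mu^{-2}\langle Z_+f,K(\cdot,z_n)\rangle=\mu^{-2}(Z_+f)(z_n),$$
so the inner product in question equals $\mu^{-2}\big((Z_+f)(z_n)-a_nf(z_n)\big)$.
\begin{itemize}
\item Computing $(Z_+f)(z_{n-1})$ along the path $(z_0,\dots,z_{n-1})$ with Definition \ref{fwd}, every term vanishes, so $(Z_+f)(z_{n-1})=0$.
\item The one-edge version of the computation in Step 1 of the proof of Theorem \ref{fwdthm} then gives
$$(Z_+f)(z_n)=\frac{f(z_{n-1})-f(z_n)}{2}+\frac{f(z_{n-1})+f(z_n)}{2}(z_n-z_{n-1})=\frac{z_n-z_{n-1}-1}{2}f(z_n)=a_nf(z_n).$$
\end{itemize}
Hence the inner product vanishes and the first relation holds.

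\textbf{Step 2 (eigenvector in $\cM_n$; second relation).} The space $\cM_n$ is $Z_-$-invariant by Proposition \ref{mninv}, and $\cM_{n-1}\subset\cM_n$ has codimension one. By Step 1, $Z_-$ induces multiplication by $\lambda_n:=\bar a_n/\mu^2$ on the quotient $\cM_n/\cM_{n-1}$. Therefore $\lambda_n$ is an eigenvalue of the finite-dimensional operator $Z_-|_{\cM_n}$; the relevant determinant factors through the quotient, and this is the step I would verify most carefully.

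Since $|a_n|\le1$ and $\mu>1$, we have $|\lambda_n|<1$, so $\lambda_n\notin\bS(\Lambda)$. By Theorem \ref{genfunthm} the eigenspace of $Z_-$ for $\lambda_n$ is spanned by $e_{\lambda_n}$, and hence $e_{\bar a_n/\mu^2}\in\cM_n$.

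By Proposition \ref{mbe}, $I-M_{B_\mu^{a_n}}M_{B_\mu^{a_n}}^*$ is a positive operator with range $\spa\{e_{\bar a_n/\mu^2}\}$. Its kernel is therefore $\{e_{\bar a_n/\mu^2}\}^\perp$, which contains $\cM_n^\perp$. This is the second relation.

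\textbf{Step 3 (remaining inclusions).} Abbreviate $M=M_{B_\mu^{a_n}}$.
\begin{itemize}
\item From \eqref{mb*}, on $\cM_n/\cM_{n-1}$ the operator $M^*$ acts as the scalar
$$\frac1\mu\Big(-\bar a_n+(\mu^2-|a_n|^2)\frac{\lambda_n}{1-a_n\lambda_n}\Big)=0.$$
Hence $M^*\cM_n\subset\cM_{n-1}$.
\item For equality, use $\ker M^*=\spa\{e_{\bar a_n/\mu^2}\}\subset\cM_n$ from Proposition \ref{mbe} and Step 2. This gives $\dim M^*\cM_n=(n+1)-1=n=\dim\cM_{n-1}$, so $M^*\cM_n=\cM_{n-1}$.
\item For $f\in\cM_{n-1}$, write $f=M^*g$ with $g\in\cM_n$. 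Then
$$Mf=MM^*g=g-(I-MM^*)g\in\cM_n,$$
because the range of $I-MM^*$ is spanned by $e_{\bar a_n/\mu^2}\in\cM_n$. Thus $M\cM_{n-1}\subset\cM_n$.
\item Taking adjoints, $M\cM_{n-1}\subset\cM_n$ gives $M^*\cM_n^\perp\subset\cM_{n-1}^\perp$, and $M^*\cM_n\subset\cM_{n-1}$ gives $M\cM_{n-1}^\perp\subset\cM_n^\perp$.
\item Conversely, if $g\in\cM_n^\perp$, then by the second relation $g=MM^*g$ with $M^*g\in\cM_{n-1}^\perp$. Hence $M\cM_{n-1}^\perp=\cM_n^\perp$.
\end{itemize}

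The main obstacle is Step 2: passing from the quotient action of $Z_-$ to an honest eigenvector $e_{\bar a_n/\mu^2}$ lying in $\cM_n$, which requires the one-dimensionality of the eigenspace from Theorem \ref{genfunthm}. Once that is in place, the rest is bookkeeping.
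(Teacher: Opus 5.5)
Your proof is correct and follows essentially the same route as the paper. You derive the first relation from the one-edge formula for $Z_+$ at $z_n$; you phrase it dually, testing against $\cM_{n-1}^\perp$ via $Z_+^*=\mu^2Z_-$, whereas the paper expands $\overline{Z_+K_\mu(z_n,\cdot)}$ directly. You then get $e_{\bar a_n/\mu^2}\in\cM_n$ from the codimension-one step and the one-dimensional eigenspace of Theorem \ref{genfunthm}, and finish with Proposition \ref{mbe} and the dimension count, which is the paper's argument, with your explicit quotient computation for $M^*$ and the identity $MM^*g=g-(I-MM^*)g$ filling in steps the paper leaves implicit.
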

\begin{proof}
According to Proposition \ref{isomshift},
$$Z_-K_\mu(z,z_n)=\dfrac{1}{\mu^2}\overline{Z_+K_\mu(z_n,z)},$$
where both shift operators act on $K_\mu$ as a DA function in the first variable. But then it follows from the formula in Definition \ref{fwd} that
$$Z_-K_\mu(\cdot,z_n)-\dfrac{\bar{a}_n}{\mu^2}K_\mu(\cdot,z_n)\in\spa\{K_\mu(\cdot,z_k):k=0,\dots,n-1\}=\cM_{n-1}.$$
Therefore, as follows from Proposition \ref{mninv} and formula \eqref{mb*},
$$
M_{B_\mu^{a_n}}^*K(\cdot,z_n)\in\cM_{n-1},\quad M_{B_\mu^{a_n}}^*\cM_n\subset\cM_{n-1},
\quad  M_{B_\mu^{a_n}}\cM_{n-1}^\perp\subset \cM_{n}^\perp.$$
Moreover, since $\dim(\cM_{n-1})<\dim(\cM_n)$ and
$$\left(Z_- - \dfrac{\bar{a}_n}{\mu^2} I\right)\cM_n\subset\cM_{n-1},$$
the $Z_-$-invariant subspace $\cM_n$ contains an eigenvector of $Z_-$ corresponding to the eigenvalue $\bar{a}/\mu^2.$ 
Therefore, $e_{\bar{a}_n/\mu^2}(\cdot)\in\cM_n.$ In view of Proposition \ref{mbe}, this implies 
\begin{gather*}
\ker M_{B_\mu^{a_n}}^*=\ran(I-M_{B_\mu^{a_n}}M_{B_\mu^{a_n}}^*)\subset\cM_n,\\
\cM_n^\perp\subset\ker(I-M_{B_\mu^{a_n}}M_{B_\mu^{a_n}}^*)=\ran M_{B_\mu^{a_n}}.
\end{gather*}
Furthermore, since $\dim(\ker(M_{B_\mu^{a_n}}^*))=1,$ $$\dim(M_{B_\mu^{a_n}}^*\cM_n)=
\dim(\cM_{n})-1=\dim(\cM_{n-1}),$$
hence
$$
M_{B_\mu^{a_n}}^*\cM_n=\cM_{n-1}.$$
It follows that
$$M_{B_\mu^{a_n}}\cM_{n-1}=M_{B_\mu^{a_n}}M_{B_\mu^{a_n}}^*\cM_n\subset\cM_n\text{ and }M_{B_\mu^{a_n}}^*\cM_{n}^\perp\subset\cM_{n-1}^\perp.$$
Finally,
since $\cM_n^\perp\subset\ker(I-M_{B_\mu^a}M_{B_\mu^a}^*),$
$$\cM_n^\perp=M_{B_\mu^{a_n}}M_{B_\mu^{a_n}}^*\cM_n^\perp\subset M_{B_\mu^{a_n}}\cM_{n-1}^\perp.$$ The opposite inclusion has already been established.
\end{proof}
Iterating the last formula in Proposition \ref{intprop} for $n=1,\dots,N$ yields the following theorem concerning basic interpolation in the weighted Hardy space $\bH^2_\mu(\CC).$
\begin{theorem}\label{interpolthm}
Let $\lambda\in\bV(\Lambda),$ and let $(z_0,z_1,\dots,z_N)$ be a shortest path from $z_0=0$ to $z_N=\lambda.$ Then $f\in\bH_\mu^2(\CC)$ is a solution of the basic interpolation problem
$$f(z_0)=f(z_1)=\dots=f(z_N)=0$$
if, and only if, $f(z)$ is of the form
$$f(z)=\dfrac{z}{\mu}\odot B_\mu^{a_1}(z)\odot \dots\odot B_\mu^{a_N}(z)\odot g(z),$$
where 
$$a_n=\dfrac{z_n-z_{n-1}-1}{2},\quad n=1,\dots N,$$
and $g\in\bH^2_\mu(\CC).$ In this case
$$\|f\|_{\bH^2_\mu(\CC)}=\|g\|_{\bH^2_\mu(\CC)}.$$
\end{theorem}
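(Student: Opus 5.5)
The plan is to iterate the last relation of Proposition \ref{intprop}, $M_{B_\mu^{a_n}}\cM_{n-1}^\perp=\cM_n^\perp$, starting from $\cM_0^\perp$, and then identify $\cM_0^\perp$ concretely. By Proposition \ref{mninv}, $\cM_N^\perp$ is exactly the set of $f\in\bH^2_\mu(\CC)$ with $f(z_0)=\dots=f(z_N)=0$. The subspace $\cM_0^\perp$ is the set of functions vanishing at $0$. By Theorem \ref{fwdthm} and the coefficient description of $\bH^2_\mu(\CC)$, this set equals $Z_+\bH^2_\mu(\CC)$. By Proposition \ref{isomshift}, $\frac1\mu Z_+$ is an isometry, so
$$\cM_0^\perp=\left\{\tfrac{1}{\mu}Z_+g:\ g\in\bH^2_\mu(\CC)\right\},\qquad \Bigl\|\tfrac1\mu Z_+g\Bigr\|=\|g\|.$$
Since $\frac1\mu Z_+g=\frac{z}{\mu}\odot g$ by Definition \ref{convoproddef}, we have $\cM_0^\perp=\frac{z}{\mu}\odot\bH^2_\mu(\CC)$ isometrically.

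Next I apply Proposition \ref{intprop} successively for $n=1,\dots,N$. Each $M_{B_\mu^{a_n}}$ is an isometry mapping $\cM_{n-1}^\perp$ onto $\cM_n^\perp$, and this hypothesis is available because the path is shortest. It follows that
$$\cM_N^\perp=M_{B_\mu^{a_N}}\cdots M_{B_\mu^{a_1}}\Bigl(\tfrac1\mu Z_+\bH^2_\mu(\CC)\Bigr),$$
and the composite map is isometric. Hence every solution $f$ equals $M_{B_\mu^{a_N}}\cdots M_{B_\mu^{a_1}}\frac1\mu Z_+g$ for a unique $g$ with $\|f\|=\|g\|$. Conversely, every such expression lies in $\cM_N^\perp$, so it solves the interpolation problem.

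The remaining step, which I expect to be the main technical point, is to rewrite the operator composition as the convolution product written in the theorem. Formula \eqref{mb} expresses each $M_{B_\mu^{a}}$ as a function of $Z_+$: a norm-convergent power series in $Z_+$, valid since $|\bar a|/\mu^2<1/\mu$. Consequently all the operators $M_{B_\mu^{a_n}}$ and $\frac1\mu Z_+$ commute with one another. Moreover, $M_{B}h=B\odot h$, where $B\odot h$ means $\sum_k \hat B(k) Z_+^k h$; this is consistent with $Z_+^k h$ having coefficients $\hat h$ shifted by $k$, and with $z^{(m)}\odot z^{(n)}=z^{(m+n)}$. I would check that $\odot$ extends associatively and commutatively to these multipliers. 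The reason is that, on the coefficient level, each of these multipliers corresponds to multiplication of power series in the basis $z^{(n)}$, i.e.\ to the Cauchy product, by linearity and continuity in $\bH^2_\mu(\CC)$. With this in hand, the operator composition equals $\frac z\mu\odot B_\mu^{a_1}\odot\cdots\odot B_\mu^{a_N}\odot g$, where the order of the factors is immaterial, and the theorem follows.
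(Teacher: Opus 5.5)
Your proposal is correct and is essentially the paper's argument: the paper's whole proof is to iterate $M_{B_\mu^{a_n}}\cM_{n-1}^\perp=\cM_n^\perp$ for $n=1,\dots,N$. You also make explicit two steps the paper leaves implicit: the base case $\cM_0^\perp=\frac{z}{\mu}\odot\bH^2_\mu(\CC)$, obtained via the isometry $\frac1\mu Z_+$, and the identification of the composed multipliers with the commutative, associative convolution product, which gives both the stated form of $f$ and $\|f\|=\|g\|$.
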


\section*{Acknowledgments}
Daniel Alpay thanks the Foster G. and Mary McGaw Professorship in Mathematical Sciences, which supported this research. \\

\section*{Data availability statement} There is no data associated with this paper.
\bibliographystyle{plain}

\end{document}